\documentclass[12pt]{amsart}
\usepackage{amssymb, amscd, amsmath, amsthm, epsfig, latexsym, enumerate}

\renewcommand{\geq}{\geqslant}
\renewcommand{\leq}{\leqslant}

\newtheorem{theorem}{Theorem}
\newtheorem{lemma}[theorem]{Lemma}
\newtheorem{cor}[theorem]{Corollary}

\newtheorem*{thm}{Theorem}
\newtheorem*{lem}{Lemma}
\newtheorem*{cor*}{Corollary}

\begin{document}
\title{Nilpotent groups with balanced presentations}
 
\author{J.A.Hillman}
\address{School of Mathematics and Statistics\\
     University of Sydney, NSW 2006\\
      Australia }

\email{jonathanhillman47@gmail.com}

\begin{abstract}
We show that if a nilpotent group $G$ has a balanced presentation 
and Hirsch length $h(G)>3$ then $\beta_1(G;\mathbb{Q})=2$.
There is one such group which is torsion-free and of Hirsch length $h=4$,
and none with $h=5$.
We also construct a torsion-free nilpotent group $G$ with $h(G)=6$ and
$\beta_2(G;F)=\beta_1(G;F)$ for all fields $F$.
\end{abstract}

\keywords{balanced, Betti number, Hirsch length, nilpotent}

\subjclass{20F18, 20J05}

\maketitle


A finite presentation for a group $G$ is {\it balanced\/}
if it has an equal number of generators and relations.
This notion has been studied in connection with finite groups, 
and is central to the Andrews-Curtis conjecture on presentations 
of the trivial group, 
but here we shall consider finitely generated infinite nilpotent groups.
It is well-known that finitely generated torsion-free nilpotent groups of Hirsch length $\leq3$ are either free abelian or are central extensions 
of $\mathbb{Z}^2$ by $\mathbb{Z}$, 
and have balanced presentations.
Work by Lubotzky and by Freedman,  Hain and Teichner relating to 
the Golod-Shafarevich Theorem shows that if a finitely generated 
nilpotent group $G$ has a balanced presentation then 
$\beta_1(G;\mathbb{Q})\leq3$ \cite{FHT,Lu83}.
Freedman,  Hain and Teichner conjecture that if $G$ is such a group and
$\beta_1(G)=3$ then $G$ is virtually $\mathbb{Z}^3$.
We shall confirm this conjecture, and show that there is
one torsion-free nilpotent group with Hirsch length $h(G)=4$ and 
$\beta_2(G;\mathbb{Q})=\beta_1(G;\mathbb{Q})=2$, 
none with $h(G)=5$ and several with $h(G)=6$.

Theorem \ref{h=4} and Corollary \ref{h=5} below were originally part of \cite{Hi19}, 
in which it is observed that if the fundamental groups $\pi_X$ and $\pi_Y$
of the complementary regions of a closed hypersurface in $S^4$ 
are both solvable (or even elementary amenable) then they satisfy the condition
$\beta_2(G;F)\leq\beta_1(G;F)$, for $G=\pi_X$ or $\pi_Y$ and all fields $F$.
Abelian groups satisfying this condition are easily listed.
Nilpotent groups are particularly amenable to homological arguments
(cf. \cite{Dw75}), and so it is natural to consider them next.
(In the topological setting other issues become intractable
if there is non-trivial torsion.
Our arguments below allow us to work with torsion-free groups,
for the most part.)

In \S1 we set out our notation and prove five lemmas.
(These are mostly well-known, but are included for convenience, 
as their proofs are short.)
Our main result is in \S2, where
Theorem \ref{beta=3} proves that if $\beta_2(G;k)=\beta_1(G;k)=3$ 
for all prime fields $k$ then $G\cong\mathbb{Z}^3$.
In particular, if $\beta_1(G;\mathbb{Q})=3$
and $h(G)>3$ then $\beta_2(G;\mathbb{Q})>\beta_1(G;\mathbb{Q})$.
Thereafter we assume that $\beta_1(G;\mathbb{Q})=2$. 
In \S3 we show that there is just one finitely generated,
torsion-free nilpotent group $G$
with $\beta_2(G;\mathbb{Q})=\beta_1(G;\mathbb{Q})$ and $h(G)=4$
(Theorem \ref{h=4}), 
and none with $h(G)=5$ (Corollary \ref{h=5}).
In \S4 we show that there are no other finitely generated, 
metabelian nilpotent groups $G$ with $h(G)>4$ and
$\beta_2(G;\mathbb{Q})=\beta_1(G;\mathbb{Q})$ (Theorem \ref{metabelian2}).
In \S5 we construct a torsion-free nilpotent group $G$
with Hirsch length 6 and 
$\beta_2(G;F)=\beta_1(G;F)=2$ for all fields $F$.

In the final section we consider briefly the related work in Lie theory
\cite{CJP,Ni83,No54,Rag}
which prompted us to look beyond the cases with $h\leq4$.
We outline why there are other nilpotent groups
with Hirsch length 6 and $\beta_2(G;\mathbb{Q})=\beta_1(G;\mathbb{Q})=2$,
but we do not attempt to construct further explicit examples.

\section{preliminaries}

If $G$ is a group $\zeta{G}$, $G'$, $G^\tau$ and $\gamma_nG$ 
shall denote the centre, the commutator subgroup,
the preimage in $G$ of the torsion subgroup of $G/G'$ and the $n$th stage 
of the lower central series, respectively. 
Our convention for commutators is $[x,y]=xyx^{-1}y^{-1}$.
The nilpotency class of  $G$ is $n$ if 
$\gamma_nG\not=1$ and $\gamma_{n+1}G=1$.
If $G$ is a finitely generated nilpotent group then it has a finite composition series 
with cyclic factors, and the Hirsch length $h(G)$ is the number of infinite factors in such a series.
(If $A$ is an abelian group then $h(A)=
\mathrm{dim}_\mathbb{Q}\mathbb{Q}\otimes{A}$ is the rank of $A$.)
If $G$ is torsion-free and not cyclic then $G$ has nilpotency class 
$\leq{h(G)}+1-h(G/G')$, which is strictly less than $h(G)$.

As most of our results involve comparing $\beta_2(G;\mathbb{Q})$ 
with $\beta_1(G;\mathbb{Q})$, for various groups $G$,
we shall simplify the notation by dropping the coefficients, 
except where they seem needed for clarity.
Thus $H_i(G)$, $H^i(G)$ and $\beta_i(G)$ shall denote homology, 
cohomology and Betti numbers with coefficients $\mathbb{Q}$, respectively.
We shall often simplify the notation further by setting $\beta=\beta_1(G)$.
We use also without further comment the facts that if $G$ is a finitely generated nilpotent group with $h(G)=h$ then $G$  
is an orientable $PD_h$-group (Poincar\'e duality group of formal dimension $h$) 
over $\mathbb{Q}$, 
and that $\chi(G)=0$. 
See \cite[Theorem 9.10]{Bi}.

A group $G$ is {\it homologically balanced\/} if
$G$ is finitely generated and $\beta_2(G;R)\leq\beta_1(G;R)$ for all fields $R$.
If $G$ has a balanced presentation then it is homologically balanced.
However, there are groups $G$ such that 
$H_1(G;\mathbb{Z})=H_2(G;\mathbb{Z})=0$, 
and which have no presentation of deficiency 0 \cite{HW85}.
Such groups are homologically balanced but have no balanced presentation.
The examples constructed in \cite{HW85} include both finite groups and 
torsion-free groups.

\begin{lemma}
\label{hom bal}
A finitely presentable group $G$ is homologically balanced if and only if 
$\beta_2(G)=\beta_1(G)$ and $H_2(G;\mathbb{Z})$ is torsion-free.
\end{lemma}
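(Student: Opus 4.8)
The plan is to express both conditions in terms of the integral homology of $G$ and then compare them via the Universal Coefficient Theorem. Because $G$ is finitely presentable, $H_1(G;\mathbb{Z})=G/G'$ and $H_2(G;\mathbb{Z})$ are finitely generated abelian groups ($H_2(G;\mathbb{Z})$ by Hopf's formula, or as a quotient of the second homology of the finite presentation $2$-complex). Write $H_1(G;\mathbb{Z})\cong\mathbb{Z}^r\oplus T_1$ and $H_2(G;\mathbb{Z})\cong\mathbb{Z}^s\oplus T_2$ with $T_1,T_2$ finite; then $\beta_1(G)=r$ and $\beta_2(G)=s$, so the statement to be proved is that $G$ is homologically balanced exactly when $r=s$ and $T_2=0$.

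The essential input is the natural short exact sequence $0\to H_i(G;\mathbb{Z})\otimes R\to H_i(G;R)\to\mathrm{Tor}(H_{i-1}(G;\mathbb{Z}),R)\to 0$ for an arbitrary field $R$, together with the elementary fact that for a finitely generated abelian group $A$ the numbers $\dim_R(A\otimes R)$ and $\dim_R\mathrm{Tor}(A,R)$ depend only on $\mathrm{rank}(A)$, on the isomorphism type of the torsion of $A$, and on $\mathrm{char}(R)$: in characteristic $0$ they equal $\mathrm{rank}(A)$ and $0$, and in characteristic $p$ they equal $\mathrm{rank}(A)+t_p(A)$ and $t_p(A)$, where $t_p(A)$ counts the cyclic direct summands of $A$ of order divisible by $p$. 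Hence $\beta_i(G;R)$ depends only on $\mathrm{char}(R)$, so it suffices to test homological balance on $R=\mathbb{Q}$ and on each $R=\mathbb{F}_p$. Since $H_0(G;\mathbb{Z})=\mathbb{Z}$ is torsion-free, the sequence gives $\beta_1(G;\mathbb{F}_p)=r+t_p(T_1)$ and $\beta_2(G;\mathbb{F}_p)=s+t_p(T_2)+t_p(T_1)$, the final summand being $\dim_{\mathbb{F}_p}\mathrm{Tor}(H_1(G;\mathbb{Z}),\mathbb{F}_p)$.

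Subtracting, the two occurrences of $t_p(T_1)$ cancel, leaving $\beta_2(G;\mathbb{F}_p)-\beta_1(G;\mathbb{F}_p)=(s-r)+t_p(T_2)$, while over $\mathbb{Q}$ one simply has $\beta_2(G;\mathbb{Q})-\beta_1(G;\mathbb{Q})=s-r$. Reading these off: if $r=s$ and $T_2=0$, every such difference is $0$, so $G$ is homologically balanced; conversely, homological balance forces $s-r\leq 0$ and $(s-r)+t_p(T_2)\leq 0$ for all primes $p$, which — once $\beta_2(G)=\beta_1(G)$ — compels $t_p(T_2)=0$ for every $p$, i.e. $H_2(G;\mathbb{Z})$ is torsion-free. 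I expect the only genuine content to be the torsion bookkeeping in the middle paragraph, and in particular the observation that $p$-torsion in $H_1(G;\mathbb{Z})$ inflates both $\beta_1(G;\mathbb{F}_p)$ (through $H_1\otimes\mathbb{F}_p$) and $\beta_2(G;\mathbb{F}_p)$ (through $\mathrm{Tor}(H_1,\mathbb{F}_p)$) by the same amount, so that it cancels out of the homological balance condition, which therefore depends only on the ranks of $H_1,H_2$ and the torsion of $H_2$. Everything else is a direct reading of the Universal Coefficient sequence.
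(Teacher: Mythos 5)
Your argument is correct and essentially identical to the paper's: both decompose $H_1(G;\mathbb{Z})$ and $H_2(G;\mathbb{Z})$ into free and torsion parts and read off $\beta_i(G;\mathbb{F}_p)$ from the Universal Coefficient Theorem, the key point in each case being that the $\mathrm{Tor}(H_1(G;\mathbb{Z}),\mathbb{F}_p)$ term inflates $\beta_2(G;\mathbb{F}_p)$ by exactly the amount that the torsion of $H_1(G;\mathbb{Z})$ inflates $\beta_1(G;\mathbb{F}_p)$, so that only the ranks and the torsion of $H_2$ are visible in the balance condition. The one point where you are slightly more careful than the paper is the equality of ranks: the paper writes $H_2(G;\mathbb{Z})\cong\mathbb{Z}^\beta\oplus U$ under a ``we may assume'', whereas you keep the ranks $r$ and $s$ separate and observe that homological balance alone only yields $s\leq r$, so the hypothesis $\beta_2(G)=\beta_1(G)$ must be invoked before the mod~$p$ comparison forces $T_2=0$.
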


\begin{proof}
We may assume that 
$H_1(G;\mathbb{Z})=G/G'\cong\mathbb{Z}^\beta\oplus{T}$ 
and 
$H_2(G;\mathbb{Z})\cong\mathbb{Z}^\beta\oplus{U}$, 
where $\beta=\beta_1(G)$ and $T$ and $U$ are finite.
If $A$ is an abelian group and $p$ is a prime let 
$r_p(A)=\mathrm{dim}_{\mathbb{F}_p}A/pA$.
Then $\beta_1(G;\mathbb{F}_p)=\beta+r_p(T)$
and $\beta_2(G;\mathbb{F}_p)=\beta+r_p(U)+r_p(Tor(T,\mathbb{Z}))$,
by the Universal Coefficient Theorem for homology.
Since $Tor(T,\mathbb{Z})\cong{T}$ (non-canonically),
$\beta_2(G;\mathbb{F}_p)=\beta_1(G;\mathbb{F}_p)$
for all primes $p$ if and only if $U=0$.
\end{proof}

Let $F(r)$ be the free group of rank $r$.
If $G=F(\beta)/\gamma_kF(\beta)$ then
$H_2(G;\mathbb{Z})\cong\gamma_kF(\beta)/\gamma_{k+1}F(\beta)$,
by the five-term exact sequence of low degree for the homology of
$G$ as a quotient of $F(\beta)$.
This abelian group has rank $\frac1k\Sigma_{d|k}\mu(d)\beta^{\frac{k}d}$,
where $\mu$ is the M\"obius function, 
by the Witt formulae \cite[Theorems 5.11 and 5.12]{MKS}.
Hence $H_2(G;\mathbb{Z})$ has rank $>\beta$ unless $\beta=1$ or $\beta=2$ 
and $k\leq3$ or $\beta=3$ and $k=2$.
Thus the only relatively free nilpotent groups with $\beta_2(G)\leq\beta_1(G)$ are  
$G\cong\mathbb{Z}^\beta$ with $\beta\leq3$ or 
$G\cong{F(2)/\gamma_3F(2)}$.
These groups have balanced presentations and Hirsch length $\leq3$.
Applying the Witt formulae inductively,
we see that $h(F(2)/\gamma_4F(2))=5$ and $h(F(2)/\gamma_5F(2))=8$.

A finitely generated nilpotent group has a maximal finite normal subgroup,
with torsion-free quotient \cite[5.2.7]{Rob}.
We may use the following lemma 
to extend arguments based on the torsion-free cases.

\begin{lemma}
\label{standard}
Let $G$ be a finitely generated nilpotent group,and let $T$ be
its maximal finite normal subgroup $T$. 
Then the natural epimorphism $p:G\to{G/T}$ induces isomorphisms
$H_i(p)=H_i(p;\mathbb{Q})$, for all $i\geq0$.
\end{lemma}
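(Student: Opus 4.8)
The plan is to apply the Lyndon--Hochschild--Serre spectral sequence to the extension
$1\to T\to G\to G/T\to1$.
(Finite generation and nilpotence of $G$ enter only to guarantee the existence of a maximal finite normal subgroup $T$, by \cite[5.2.7]{Rob}; the argument below works for any group with a finite normal subgroup.)
With rational coefficients this spectral sequence has $E^2_{pq}=H_p(G/T;H_q(T;\mathbb{Q}))$ and converges to $H_{p+q}(G;\mathbb{Q})$.

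Since $T$ is finite, its order is invertible in $\mathbb{Q}$, so the transfer (averaging) argument gives $H_q(T;\mathbb{Q})=0$ for all $q>0$, while $H_0(T;\mathbb{Q})=\mathbb{Q}$ with the trivial $G/T$-action. Hence $E^2_{pq}=0$ for $q>0$ and $E^2_{p0}=H_p(G/T;\mathbb{Q})$. A glance at the bidegrees of the differentials $d^r$ ($r\geq2$) shows that none can be nonzero, so the spectral sequence collapses at the $E^2$ page, and the edge homomorphism $H_p(G;\mathbb{Q})\to E^2_{p0}=H_p(G/T;\mathbb{Q})$ is an isomorphism for every $p\geq0$.

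Finally I would identify this edge homomorphism with the map $H_p(p;\mathbb{Q})$ induced by the quotient $p\colon G\to G/T$; this is the standard description of the edge map for a group extension, and together with the coefficient convention $H_i(p)=H_i(p;\mathbb{Q})$ it yields the statement. The only step needing any attention is this last identification --- that the isomorphism produced by the collapse is genuinely induced by $p$, rather than merely an abstract one --- but this is routine naturality of the edge homomorphism and poses no real obstacle.
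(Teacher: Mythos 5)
Your argument is correct and is essentially identical to the paper's own proof: both use the Lyndon--Hochschild--Serre spectral sequence for the extension $1\to T\to G\to G/T\to 1$, observe that $H_q(T;\mathbb{Q})=0$ for $q>0$ because $T$ is finite, and conclude that the collapse makes the edge homomorphisms $H_i(p)$ isomorphisms. The extra detail you supply (the transfer argument and the naturality of the edge map) is just an expansion of what the paper leaves implicit.
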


\begin{proof}
Since $T$ is finite, $H_i(T)=0$ for $i>0$.
Hence the LHS spectral sequence for the homology of $G$ as an extension of 
$G/T$ by $T$ collapses, and the edge homomorphisms $H_i(p)$ 
are isomorphisms for all $i\geq0$.
\end{proof}

We may also arrange that the abelianization is torsion-free.

\begin{lemma}
\label{free abelian}
Let $G$ be a finitely generated torsion-free nilpotent group,
and let $\beta=\beta_1(G)$.
Then $G$ has a subgroup $J$ of finite index which can be generated 
by $\beta$ elements and such that $H_i(J)\cong{H_i(G)}$, for all $i\geq0$.
\end{lemma}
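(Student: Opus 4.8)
The plan is to take $J$ to be generated by $\beta$ suitably chosen elements, and to deduce the homological part from the general fact that the inclusion of a finite-index subgroup of a finitely generated nilpotent group induces isomorphisms on rational homology.

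\emph{Construction of $J$.} Since $G/G^\tau\cong\mathbb{Z}^\beta$, choose $x_1,\dots,x_\beta\in G$ whose images form a basis of $G/G^\tau$, and set $J=\langle x_1,\dots,x_\beta\rangle$; then $J$ is generated by $\beta$ elements. Its image in $G/G'$ surjects onto $G/G^\tau$, and hence, since $G^\tau/G'$ is finite, has finite index in $G/G'$; thus $JG'$ has finite index in $G$. For a finitely generated nilpotent group this already forces $[G:J]<\infty$, since a subgroup has finite index if and only if it has the same Hirsch length (cf.\ \cite{Rob}; this can also be proved directly by induction on the nilpotency class). Once we know $\beta_1(J)=\beta_1(G)=\beta$ (from the homological part below), the abelianization of $J$ is a $\beta$-generated abelian group of rank $\beta$, hence free abelian of rank $\beta$; so in fact the abelianization of $J$ is torsion-free, as remarked above.

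\emph{The homological part.} Let $J$ be any finite-index subgroup of a finitely generated nilpotent group $G$; I claim the inclusion induces isomorphisms $H_i(J;\mathbb{Q})\to H_i(G;\mathbb{Q})$ for all $i\geq0$, and I would prove this by induction on the nilpotency class $c$ of $G$. If $c\leq1$ then $G$ is finitely generated abelian, $H_*(G;\mathbb{Q})$ is functorially the exterior algebra on $H_1(G;\mathbb{Q})=\mathbb{Q}\otimes G$, and $\mathbb{Q}\otimes J\to\mathbb{Q}\otimes G$ is an isomorphism (as $J$ has finite index), so the claim holds. If $c>1$ put $A=\gamma_cG$; this is a central subgroup with $\mathbb{Q}\otimes A\cong\mathbb{Q}^r$ for $r=h(A)$, and $G/A$ has nilpotency class $<c$. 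Now $J\cap A$ is central in $J$ and of finite index in $A$, while $J/(J\cap A)$ has finite index in $G/A$; moreover $G/A$ and $J/(J\cap A)$ act trivially on the rational homology of $A$ and of $J\cap A$. Hence the Lyndon--Hochschild--Serre spectral sequences of the extensions $1\to J\cap A\to J\to J/(J\cap A)\to1$ and $1\to A\to G\to G/A\to1$ have
\[
E^2_{p,q}(J)=H_p(J/(J\cap A);\mathbb{Q})\otimes\Lambda^q(\mathbb{Q}\otimes(J\cap A)),\quad E^2_{p,q}(G)=H_p(G/A;\mathbb{Q})\otimes\Lambda^q(\mathbb{Q}\otimes A),
\]
and the map between them induced by the inclusion of extensions is the tensor product of the isomorphism $H_p(J/(J\cap A);\mathbb{Q})\cong H_p(G/A;\mathbb{Q})$ given by the inductive hypothesis with the evident isomorphism $\Lambda^q(\mathbb{Q}\otimes(J\cap A))\cong\Lambda^q(\mathbb{Q}\otimes A)$. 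An isomorphism on $E^2$ induces an isomorphism on $E^\infty$, and since these first-quadrant spectral sequences have bounded filtrations it follows that $H_*(J;\mathbb{Q})\to H_*(G;\mathbb{Q})$ is an isomorphism, closing the induction.

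I expect the homological part to be the only point requiring real care: one must run the comparison of spectral sequences correctly, using that $\gamma_cG$ is central (so the coefficient systems are trivial and the $E^2$-terms factor as tensor products) and that $G/\gamma_cG$ has strictly smaller class (so the induction terminates). Everything else is immediate from the construction or is a standard property of finitely generated nilpotent groups. (Alternatively, the homological part follows from the fact that commensurable finitely generated nilpotent groups have the same Malcev completion, together with Nomizu's computation of the rational cohomology of a nilmanifold from its rational Lie algebra; but the spectral-sequence argument is self-contained.)
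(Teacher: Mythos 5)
Your proof is correct and takes essentially the same route as the paper's: $J$ is generated by lifts of a basis of the (rationalized) abelianization, the finite-index claim rests on the same standard fact cited from Robinson, and the homology isomorphism is proved by the same induction on nilpotency class via a comparison of LHS spectral sequences of central extensions. The only cosmetic differences are that you use $\gamma_cG$ where the paper uses $\zeta{G}$ as the central subgroup, and you spell out the $E^2$-term comparison that the paper leaves implicit.
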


\begin{proof}
Let $f:F(\beta)\to{G}$ be a homomorphism such that $H_1(f)$ 
is an isomorphism, and let $J=\mathrm{Im}(f)$.
Then $[G:J]$ is finite, and $J$ is subnormal in $G$ \cite[5.2.4]{Rob}.
The inclusion $j:J\to{G}$ induces a map between the LHS spectral sequences 
for $J$ and $G$ as central extensions of $J/J\cap\zeta{G}$ and $G/\zeta{G}$,
respectively.
Induction on the nilpotency class of $G$ now shows that 
$H_i(j)$ is an isomorphism for all $i\geq0$.
\end{proof}

We may also use induction on the nilpotency class to show that $J$ and $G$ 
have isomorphic Mal'cev completions.

\begin{lemma}
\cite{Lu83}
\label{min gen}
Let $G$ be a finitely generated nilpotent group.
Then $G$ can be generated by $d$ elements,
where $d=\max\{\beta_1(G;\mathbb{F}_p)|p~prime\}$.
\end{lemma}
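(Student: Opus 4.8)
The plan is to reduce the statement to the abelianization $G/G'$, where it becomes a routine consequence of the structure theorem for finitely generated abelian groups. Write $d(H)$ for the least cardinality of a generating set of a group $H$. The point is that $d(G)=d(G/G')$ for any finitely generated nilpotent $G$, and that $d(A)=\max_p\dim_{\mathbb{F}_p}(A/pA)$ for any finitely generated abelian $A$; combining these with the identification $H_1(G;\mathbb{F}_p)\cong(G/G')\otimes\mathbb{F}_p$ (so that $\beta_1(G;\mathbb{F}_p)=\dim_{\mathbb{F}_p}(G/G')/p(G/G')$) yields the lemma.

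First I would check that $G'\le\Phi(G)$, the Frattini subgroup. Since $G$ is nilpotent it satisfies the normalizer condition, so every maximal subgroup $M<G$ is normal; then $G/M$ is a nilpotent group with no proper non-trivial subgroup, hence cyclic of prime order, hence abelian, so $G'\le M$. Intersecting over all maximal subgroups gives $G'\le\Phi(G)$. It then follows that $d(G)=d(G/G')$: the inequality $d(G)\ge d(G/G')$ is clear since $G/G'$ is a quotient, while if $g_1,\dots,g_d$ have images generating $G/G'$ then, as $G'\le\Phi(G)$, their images generate $G/\Phi(G)$, so $\langle g_1,\dots,g_d\rangle\Phi(G)=G$; since $G$ is finitely generated a proper subgroup lies in some maximal subgroup, which would then contain $\Phi(G)$ and hence all of $G$, a contradiction, so $\langle g_1,\dots,g_d\rangle=G$.

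The abelian computation is standard: for each prime $p$ the group $A/pA$ is a quotient of $A$, so $d(A)\ge\dim_{\mathbb{F}_p}(A/pA)$; conversely, writing $A\cong\mathbb{Z}^r\oplus\mathbb{Z}/d_1\oplus\cdots\oplus\mathbb{Z}/d_k$ with $1<d_1\mid\cdots\mid d_k$ shows $A$ is generated by $r+k$ elements, and any prime $p$ dividing $d_1$ (hence every $d_i$) gives $\dim_{\mathbb{F}_p}(A/pA)=r+k$. Hence $d(A)=\max_p\dim_{\mathbb{F}_p}(A/pA)$, and applying this with $A=G/G'$ completes the proof.

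There is no real obstacle here: the only input peculiar to nilpotent groups is the containment $G'\le\Phi(G)$ (via the normalizer condition), and the remaining steps are elementary. The only points requiring a little care are the use of the Frattini ``non-generator'' property in the finitely generated setting and the appeal to the structure theorem, so I expect the write-up to be short.
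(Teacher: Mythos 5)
Your proof is correct and follows essentially the same route as the paper: reduce to the abelianization and apply the structure theorem for finitely generated abelian groups. The only difference is that the paper cites \cite[Lemma 5.9]{MKS} for the fact that a set whose image generates $G/G'$ generates $G$, whereas you prove this directly via the containment $G'\leq\Phi(G)$ and the non-generator property of the Frattini subgroup.
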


\begin{proof}
If $G$ is abelian this is an easy consequence of the structure
theorem for finitely generated abelian groups.
In general, if $G$ is nilpotent and the image in $G/G'$ of a subset $X\subset{G}$ 
generates $G/G'$ then $X$ generates $G$ \cite[Lemma 5.9]{MKS}.
\end{proof}

The expression for $d$ is clearly best possible.

The following lemma was prompted by Lemma 2 of \cite{CJP}.

\begin{lemma}
\label{E^2_{1,1}}
Let $G$ be a group with a subgroup $Z\leq\zeta{G}\cap{G'}$ 
such that $z=h(Z)\geq1$,
and let $\overline{G}=G/Z$ and $R$ be $\mathbb{Z}$ or a field.
Let $\overline{\beta}_i=\beta_i(\overline{G};R)$, for $i\geq0$.
Then $\overline\beta_2\geq{z}$ and
\[
\overline{\beta}_2-z+
\max\{\overline{\beta}_1{z}-\overline{\beta}_3,0\}\leq\beta_2(G;R)\leq
\overline{\beta}_2-z+\overline\beta_1z+\binom{z}2.
\]
\end{lemma}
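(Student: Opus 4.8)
The plan is to run the Lyndon--Hochschild--Serre homology spectral sequence
\[
E^2_{p,q}=H_p\bigl(\overline{G};H_q(Z;R)\bigr)\Longrightarrow H_{p+q}(G;R)
\]
for the central extension $1\to Z\to G\to\overline{G}\to1$ and to read off $\beta_2(G;R)$ from the anti-diagonal $p+q=2$. We may assume $Z$ is free abelian of rank $z$ (this is the only case we shall need, and it holds whenever $G$ is torsion-free); then $H_q(Z;R)\cong R^{\binom{z}{q}}$ for all $q$, with trivial $\overline{G}$-action because $Z$ is central, so $E^2_{p,q}\cong H_p(\overline{G};R)^{\binom{z}{q}}$ has $R$-rank $\binom{z}{q}\overline{\beta}_p$. (Every quantity used below is additive on short exact sequences, so ``rank'' means $\dim_R$ when $R$ is a field and torsion-free rank when $R=\mathbb{Z}$, and the argument is uniform.) Since $H_2(G;R)$ carries a filtration with graded pieces $E^\infty_{2,0}$, $E^\infty_{1,1}$, $E^\infty_{0,2}$, one has $\beta_2(G;R)=\operatorname{rank}E^\infty_{2,0}+\operatorname{rank}E^\infty_{1,1}+\operatorname{rank}E^\infty_{0,2}$, so it remains to identify or bound these three terms.

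The hypothesis $Z\leq G'$ is used exactly once, to control the bottom row. Since $Z$ maps trivially to $G/G'$, the inclusion $Z\hookrightarrow G$ induces the zero map $H_1(Z;R)\to H_1(G;R)$; inserting this into the exact sequence of low degree
\[
H_2(G;R)\to H_2(\overline{G};R)\xrightarrow{\ d_2\ }H_1(Z;R)\to H_1(G;R)\to H_1(\overline{G};R)\to0
\]
shows that $d_2\colon E^2_{2,0}\to E^2_{0,1}$ is surjective. Hence $\overline{\beta}_2=\operatorname{rank}E^2_{2,0}\geq\operatorname{rank}E^2_{0,1}=z$, which is the first assertion; and $E^3_{2,0}=\ker\bigl(d_2\colon E^2_{2,0}\to E^2_{0,1}\bigr)$ has rank exactly $\overline{\beta}_2-z$. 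Since no later differential enters or leaves position $(2,0)$, this gives $\operatorname{rank}E^\infty_{2,0}=\overline{\beta}_2-z$.

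The remaining two entries are pure differential bookkeeping. At $(1,1)$ the only nonzero differential is the incoming $d_2\colon E^2_{3,0}=H_3(\overline{G};R)\to E^2_{1,1}$, so $E^\infty_{1,1}=\operatorname{coker}(d_2)$; being the cokernel of a map from a rank-$\overline{\beta}_3$ module into a rank-$z\overline{\beta}_1$ module, its rank lies between $z\overline{\beta}_1-\overline{\beta}_3$ and $z\overline{\beta}_1$, and in particular is at least $\max\{z\overline{\beta}_1-\overline{\beta}_3,0\}$. At $(0,2)$ there is no outgoing differential, so $E^\infty_{0,2}$ is a subquotient of $E^2_{0,2}\cong R^{\binom{z}{2}}$, whence $0\leq\operatorname{rank}E^\infty_{0,2}\leq\binom{z}{2}$. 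Adding the three contributions yields the displayed inequalities. There is no real obstacle: the whole content is the surjectivity of $d_2\colon E^2_{2,0}\to E^2_{0,1}$ forced by $Z\leq G'$, and everything else is routine spectral-sequence accounting.
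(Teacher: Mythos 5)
Your argument is correct and follows essentially the same route as the paper: the LHS homology spectral sequence of the central extension, trivial action of $\overline{G}$ on $H_*(Z;R)$, the ranks of $E^2_{2,0}$, $E^2_{1,1}$, $E^2_{0,1}$, $E^2_{3,0}$, $E^2_{0,2}$, and surjectivity of $d^2_{2,0}$ forced by $Z\leq G'$. You have merely written out the bookkeeping the paper leaves as ``the lemma follows easily,'' and your explicit remark that one should take $Z$ free abelian (so that $H_q(Z;R)$ has rank $\binom{z}{q}$) is a reasonable reading of the same implicit assumption in the paper.
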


\begin{proof}
The quotient $\overline{G}$ acts trivially on the cohomology of $Z$,
since $Z$ is central in $G$.
Hence the rank of the $E^2_{1,1}$ term of the LHS spectral sequence  
\[
H_p(\overline{G};H_q(Z;R))\Rightarrow{H_{p+q}(G;R)}.
\]
for
the homology of $G$ as a central extension of $\overline{G}$ 
is $\overline{\beta}_1z$.
The $E^2_{2,0}$ term has rank $\overline{\beta}_2$,
the $E^2_{0,1}$ term has rank $z$,
the $E^2_{3,0}$ term has rank $\overline{\beta}_3$,
and the $E^2_{0,2}$ term has rank $\binom{z}2$.
The differential $d^2_{2,0}$ must be surjective, since $Z\leq{G'}$,
and so $\overline\beta_2\geq{z}$.
The lemma follows easily.
\end{proof}

When $Z\cong\mathbb{Z}$ the spectral sequence reduces to the Gysin sequence
associated to $G$ as an extension of $\overline{G}$ by $\mathbb{Z}$,
and the bounds given by Lemma \ref{E^2_{1,1}} simplify to
\[
\beta_2(\overline{G};R)-1\leq\beta_2(G;R)\leq
\beta_2(\overline{G};R)-1+\beta_1(\overline{G};R).
\]
Note also that if $h=h(\overline{G})\leq6$ (and $R=\mathbb{Q}$)
then $\beta_3(\overline{G})$ is determined by $\beta_1(\overline{G})$ 
and $\beta_2(\overline{G})$,
via Poincar\'e duality and the condition $\chi(\overline{G})=0$.

From another point of view, if 
$G\cong{F(\beta)/\gamma_kF(\beta)}$ 
then rearranging the right hand inequality gives the bound
\[
1-\beta+\frac1k\Sigma_{d|k}\mu(d)\beta^{\frac{k}d}\leq\beta_2(\overline{G}).
\]

\section{the case $\beta=3$}


Lubotzky has shown that if a nilpotent group $G$ can be generated by $d$ elements and $p$ is a prime such that $d=\beta_1(G;\mathbb{F}_p)$
(as in Lemma \ref{min gen}) then either
$\beta_2(G;\mathbb{F}_p)>\frac{d^2}4$ or
$G=1$, $\mathbb{Z}$ or $\mathbb{Z}^2$.
Hence homologically balanced 
nilpotent groups can be generated by 3 elements \cite{Lu83}.
There is a similar inequality for the rational Betti numbers.

\begin{lemma}
Let $G$ be a finitely generated nilpotent group such that $h(G)>2$.
Then $\beta_2(G)>\frac14\beta_1(G)^2$.
\end{lemma}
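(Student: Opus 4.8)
The statement is a rational analogue of Lubotzky's mod-$p$ Golod--Shafarevich-type inequality, so the plan is to induct on the nilpotency class of $G$ and peel off a central $\mathbb Z$ at each stage, controlling $\beta_2$ from below via Lemma \ref{E^2_{1,1}}. First I would reduce to the torsion-free case: by Lemma \ref{standard} we may replace $G$ by $G/T$ without changing the rational Betti numbers or (since $T$ is finite) the Hirsch length, and by Lemma \ref{free abelian} we may further pass to a finite-index subgroup $J$ with $\beta_1(J;\mathbb Q)=\beta_1(G;\mathbb Q)=\beta$, $h(J)=h(G)$ and $H_i(J;\mathbb Q)\cong H_i(G;\mathbb Q)$, so that $J/J'$ is free abelian of rank $\beta$. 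Thus we may assume $G$ is torsion-free with $G/G'\cong\mathbb Z^\beta$. If $h(G)=h>2$ and $\beta\leq 2$ the inequality reads $\beta_2(G)>\frac14\beta^2\in\{0,\tfrac14,1\}$; since a torsion-free nilpotent group of Hirsch length $>2$ which is not $\mathbb Z^{>2}$ (excluded because $\beta\le 2$) has $G'\ne 1$, and a central $\mathbb Z\le\zeta G\cap G'$ forces $\beta_2\ge 1$ by the last assertion of Lemma \ref{E^2_{1,1}}, the cases $\beta\le 2$ are immediate (for $\beta=0$ the group is trivial and $h=0$, contradicting $h>2$; for $\beta=1$ or $2$ we get $\beta_2\ge 1>\frac14\beta^2$). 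So the real content is $\beta\geq 3$.

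For $\beta\geq 3$ I would argue by induction on $h(G)$. Choose $Z\cong\mathbb Z$ with $Z\leq\zeta G\cap G'$ (possible since $G$ is torsion-free nilpotent, non-abelian, so $\gamma_cG$ is a nontrivial central subgroup of the torsion-free group $G$, and we may take a $\mathbb Z$ inside it — note $\gamma_cG\le G'$). Set $\overline G=G/Z$, so $h(\overline G)=h-1$ and $\beta_1(\overline G;\mathbb Q)=\beta$ (killing a commutator does not change the abelianization rationally). The lower bound of Lemma \ref{E^2_{1,1}} with $z=1$ gives $\beta_2(G;\mathbb Q)\geq\beta_2(\overline G;\mathbb Q)-1$. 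If $h-1>2$, induction gives $\beta_2(\overline G)>\frac14\beta^2$, hence $\beta_2(G)\geq\beta_2(\overline G)-1>\frac14\beta^2-1$ — which is \emph{not quite} strong enough, so the $-1$ loss must be recovered. The base case $h(\overline G)=2$: then $\overline G$ is virtually $\mathbb Z^2$, but $\beta_1(\overline G)=\beta\geq 3>2=h(\overline G)$ is impossible for a nilpotent group (the abelianization rank cannot exceed the Hirsch length). Hence when $\beta\ge 3$ we always have $h(\overline G)\ge 3$ as well, so the induction never bottoms out at $h=2$ with $\beta\ge3$ — instead the induction must bottom out elsewhere, and the genuinely new information has to come from a sharper estimate than the crude Gysin bound.

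The key step, and the main obstacle, is therefore to upgrade the lower bound so the inductive loss is absorbed. I would use the full inequality of Lemma \ref{E^2_{1,1}}: with $z=1$ the term $\max\{\overline\beta_1 z-\overline\beta_3,0\}=\max\{\beta-\beta_3(\overline G),0\}$ can be added, and more importantly I would iterate all the way down the lower central series at once rather than one $\mathbb Z$ at a time, taking $Z$ to be a maximal-rank central subgroup inside $G'$; then $\overline G=G/Z$ is a nilpotent group with $h(\overline G)=h-z$ whose Hirsch length equals its abelianization rank plus whatever remains of the commutator subgroup, and one can arrange $\overline G$ so that $G'/Z$ is finite, i.e. $\overline G$ is virtually abelian of Hirsch length $\beta$, where $\beta_2$ is computed directly: $\beta_2(\mathbb Z^\beta)=\binom\beta2$. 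Feeding $\overline\beta_2=\binom\beta2$, $\overline\beta_1=\beta$, $\overline\beta_3=\binom\beta3$ into the lower bound of Lemma \ref{E^2_{1,1}} gives
\[
\beta_2(G)\ \geq\ \binom\beta2-z+\max\Big\{\beta z-\binom\beta3,\,0\Big\},
\]
and one checks that $\binom\beta2-z\geq\binom\beta2-(h-\beta)$ together with $h<$ (the class bound, hence $z=h-\beta$ is controlled) yields $\beta_2(G)>\frac14\beta^2$ for all $\beta\geq 3$, the worst case being $\beta=3$ where $\binom32=3>\frac94$ already with room to spare for $z=1$, and larger $\beta$ being easier since $\binom\beta2-\frac14\beta^2=\frac14\beta^2-\frac12\beta$ grows. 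The delicate point to get right is bounding $z=h(G')$ in terms of $\beta$ and $h$ using the class estimate $c\le h+1-\beta$ from Section 1, so that $\binom\beta2-z$ stays above $\frac14\beta^2$; I expect this bookkeeping — ensuring the central subgroup we quotient by is large enough that $\overline G$ is virtually $\mathbb Z^\beta$ yet the quotient map still injects enough of $H_2$ — to be where the argument needs the most care.
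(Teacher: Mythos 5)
Your proposal diverges from the paper's proof and, as written, has a genuine gap at its central step. The paper's argument is a short transfer of Lubotzky's mod-$p$ inequality to characteristic zero: by Lemma \ref{free abelian} one passes to a finite-index subgroup $J$ generated by $\beta=\beta_1(G)$ elements with the same rational homology, so that $\beta_1(J;\mathbb{F}_p)=\beta$ for every prime $p$ and Lubotzky's theorem gives $\beta_2(J;\mathbb{F}_p)>\frac{\beta^2}{4}$; choosing $p$ coprime to the (finite) torsion of $H_2(J;\mathbb{Z})$ then forces $\beta_2(J;\mathbb{Q})=\beta_2(J;\mathbb{F}_p)$, and the lemma follows. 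You instead attempt an induction via Lemma \ref{E^2_{1,1}}, and you candidly note that the crude Gysin bound $\beta_2(G)\geq\beta_2(\overline{G})-1$ loses one at each stage; but the mechanism you propose to recover the loss does not exist in general. Specifically, you want to choose a central subgroup $Z\leq\zeta{G}\cap{G'}$ with $G'/Z$ finite, so that $\overline{G}$ is virtually $\mathbb{Z}^\beta$ and you can feed $\overline\beta_i=\binom{\beta}{i}$ into the lemma. That choice is possible only when $G$ has nilpotency class $2$: already for $F(2)/\gamma_4F(2)$ one has $h(G')=3$ while $\zeta{G}\cap{G'}=\gamma_3G$ has rank $2$, and in general $\zeta{G}\cap{G'}$ has strictly smaller rank than $G'$ once the class exceeds $2$. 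So the displayed inequality $\beta_2(G)\geq\binom{\beta}{2}-z+\max\{\beta z-\binom{\beta}{3},0\}$ is not available, and the proposal never supplies the ``sharper estimate'' it acknowledges is needed; the final paragraph is a plan, not an argument.

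There is also an arithmetic slip in your treatment of the small cases: for $\beta=2$ the target is $\beta_2(G)>\frac14\cdot4=1$, so $\beta_2\geq1$ does not suffice (and the bound $\beta_2(G)\geq1$ is not what the last assertion of Lemma \ref{E^2_{1,1}} gives anyway --- that assertion concerns $\beta_2(\overline{G})$, not $\beta_2(G)$). The $\beta\leq2$ cases do follow, but from Poincar\'e duality and $\chi(G)=0$ rather than from the spectral sequence bound as you state it. If you want to salvage an induction of the kind you sketch, you would essentially be reproving Lubotzky's theorem rationally; the paper avoids this entirely by quoting the mod-$p$ result and selecting a good prime.
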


\begin{proof}
We may assume that $G$ can be generated by $\beta=\beta_1(G)$ elements, 
by Lemma \ref{free abelian}.
Then $\beta_1(G;\mathbb{F}_p)=\beta$, 
and so $\beta_2(G;\mathbb{F}_p)>\frac{\beta^2}4$, for all primes $p$
\cite{Lu83}.
Since $G$ is a finitely generated nilpotent group, 
$H_2(G;\mathbb{Z})$ is finitely generated,
and so the torsion subgroup of $H_2(G;\mathbb{Z})$ is finite.  
If $p$ does not divide the order of this torsion subgroup
then $\beta_2(G)=\beta_2(G;\mathbb{F}_p)$.
The lemma follows.
\end{proof}

An immediate consequence is that if $G$ is a finitely generated nilpotent group such that $\beta_2(G)\leq\beta_1(G)$ then $\beta_1(G)\leq3$.
We shall use the following result from commutative algebra 
to settle the case $\beta_1(G)=3$.

\begin{lem}
\cite[Lemma 1]{CEM}
Let $R$ be a Cohen-Macaulay local ring of Krull dimension $d$ 
and with residue field $k$.
Let $M$ be a nonzero $R$-module of finite length.
Then $\dim_k\mathrm{Tor}_1^R(k,M)\geq\dim_kk\otimes_RM+d-1$.
\qed
\end{lem}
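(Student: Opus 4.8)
The plan is to rephrase the inequality homologically and then read it off from a presentation matrix of $M$ by means of the classical bound on the height of a determinantal ideal. Write $\beta_i=\dim_k\mathrm{Tor}^R_i(k,M)$; then $\beta_0=\dim_kk\otimes_RM$ is the minimal number of generators $\mu(M)$ of $M$ and $\beta_1=\dim_k\mathrm{Tor}^R_1(k,M)$, so the assertion is that $\beta_1\geq\beta_0+d-1$. (One should assume $d\geq1$; for $d=0$ the inequality is either empty or false, as $R$ a field and $M=k^2$ shows.)

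First I would choose a minimal presentation $R^{\beta_1}\xrightarrow{\,A\,}R^{\beta_0}\to M\to0$, so that $A$ is a $\beta_0\times\beta_1$ matrix with entries in $\mathfrak m$ and $M=\mathrm{coker}(A)$. Localising at a minimal prime $\mathfrak p$ of $R$ gives $M_\mathfrak p=0$, hence $(\mathrm{Im}\,A)_\mathfrak p\cong R_\mathfrak p^{\beta_0}$, and therefore $\beta_1=\mu(\mathrm{Im}\,A)\geq\beta_0$. Next, $\mathrm{Fitt}_0(M)=I_{\beta_0}(A)$, the ideal generated by the $\beta_0\times\beta_0$ minors of $A$; since $M$ has finite length, $\sqrt{\mathrm{Fitt}_0(M)}=\sqrt{\mathrm{ann}(M)}=\mathfrak m$, so $\mathrm{ht}\,I_{\beta_0}(A)=\mathrm{ht}\,\mathfrak m=d$. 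Finally, I would invoke the bound on the codimension of a determinantal ideal (Macaulay; Eagon--Northcott): for any $p\times q$ matrix $A$ over a Noetherian ring and $1\leq t\leq\min(p,q)$,
\[
\mathrm{ht}\,I_t(A)\leq(p-t+1)(q-t+1).
\]
With $p=\beta_0$, $q=\beta_1$ and $t=\beta_0$ (legitimate since $\beta_1\geq\beta_0$) this reads $d\leq\beta_1-\beta_0+1$, i.e.\ $\beta_1\geq\beta_0+d-1$, as required.

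The one substantive ingredient is the determinantal height inequality, which is where the real content lies: it holds because the generic determinantal ideal has exactly the expected codimension and codimension cannot increase under specialisation. Together with the easy remark that a torsion module has a first syzygy of full rank, this is the whole argument, and one sees that the Cohen--Macaulay hypothesis is not actually needed here---only $\dim R\geq1$ and finiteness of the length of $M$. If instead one wanted to stay strictly within homological algebra, an alternative would be to induct on $d$, split off a nonzerodivisor $x\in\mathfrak m$, and compare the Betti numbers of $M$ over $R$ with those of $M/xM$ over $R/xR$ via the change-of-rings spectral sequence $\mathrm{Tor}^{R/xR}_p\!\bigl(k,\mathrm{Tor}^R_q(R/xR,M)\bigr)\Rightarrow\mathrm{Tor}^R_{p+q}(k,M)$; there the difficulty---and it can be genuinely delicate---is to show that the edge homomorphism $(0:_Mx)\to\mathrm{Tor}^R_1(k,M)$ is nonzero.
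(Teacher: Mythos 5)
The paper does not prove this lemma: it is quoted verbatim from \cite{CEM} and stamped with a \qed, so there is no internal proof to compare against. Your argument is a correct, self-contained proof, and it is the standard one for this inequality (and, to the best of my knowledge, essentially the argument in \cite{CEM} itself): pass to a minimal presentation $R^{\beta_1}\xrightarrow{A}R^{\beta_0}\to M\to 0$, note that $I_{\beta_0}(A)=\mathrm{Fitt}_0(M)$ is $\mathfrak m$-primary because $M$ has finite length, hence has height $d$, and apply the Eagon--Northcott bound $\mathrm{ht}\,I_t(A)\leq(p-t+1)(q-t+1)$ with $t=p=\beta_0$. Your preliminary step $\beta_1\geq\beta_0$ (needed so that $t\leq\min(p,q)$) is handled correctly via localization at a minimal prime; one could equally absorb it into the main step, since $\beta_1<\beta_0$ would force $I_{\beta_0}(A)=0$, contradicting $\sqrt{I_{\beta_0}(A)}=\mathfrak m$ when $d\geq1$. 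The only quibble is that the determinantal height bound requires $I_t(A)$ to be a proper ideal, which holds here because a minimal presentation matrix has entries in $\mathfrak m$; you should say so. Your two side remarks are both correct and worth recording: the Cohen--Macaulay hypothesis plays no role in this particular inequality (it is needed for the higher Betti number estimates elsewhere in \cite{CEM}), and the statement as transcribed does silently require $d\geq1$ --- your counterexample $R=k$, $M=k^2$ is valid --- though this is harmless for the paper, which applies the lemma only with $d=3$.
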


We shall apply this result to modules over formal power series rings
$R=k[[x,y,z]]$, where $k$ is a prime field.
(Thus $k=\mathbb{Q}$ or $\mathbb{F}_p$ for some prime $p$.)
All such rings are regular, and hence are Cohen-Macaulay
\cite[page 187]{Ku}.

\begin{theorem}
\label{beta=3}
Let $G$ be a finitely generated nilpotent group such that 
$G/G'\cong\mathbb{Z}^3$.
If $G'\not=1$ then $\beta_2(G;k)>3$ for some prime field $k$.
\end{theorem}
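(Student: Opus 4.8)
The plan is to deduce the estimate from Lemma~1 of \cite{CEM} by extracting from the lower central series of $G$ a suitable finite-length module over a formal power series ring. Write $A=G/G'\cong\mathbb{Z}^3$ and consider the (not necessarily central) extension $1\to G'\to G\to A\to1$. Since $G$ is finitely generated nilpotent it is polycyclic, so $G'$ is finitely generated and $M:=H_1(G';k)\cong (G'/G'')\otimes k$ is finite dimensional over $k$ and carries the conjugation action of $A$ (well defined because inner automorphisms act trivially on $H_1$). The first step is to choose the prime field: as $G'\not=1$ is a non-trivial finitely generated nilpotent group, $G'/G''$ is a non-trivial finitely generated abelian group, so there is a prime field $k$ with $M\not=0$. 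Fix such a $k$.

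Next I would put $M$ in a form to which Lemma~1 of \cite{CEM} applies. The augmentation ideal $I$ of $k[A]$ acts nilpotently on $M$: the images of the terms $\gamma_jG$ ($j\geq2$) give a finite filtration of $G'/G''$ whose associated graded is a quotient of $\bigoplus_{j\geq2}\gamma_jG/\gamma_{j+1}G$, and $A$ acts trivially on each $\gamma_jG/\gamma_{j+1}G$ since $[G,\gamma_jG]\leq\gamma_{j+1}G$; hence $I$ shifts the filtration by one step and $I^{c-1}M=0$, where $c$ is the nilpotency class of $G$. Thus $M$ is supported at $I$ and is a finite-length module over the $I$-adic completion $R=\widehat{k[A]}_I\cong k[[x,y,z]]$ (the generators $t_i-1$ of $I$ corresponding to the coordinates), which is regular local of Krull dimension $3$, hence Cohen--Macaulay. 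Comparing the Koszul complex on $t_1-1,t_2-1,t_3-1$ over $k[A]$ with the Koszul complex on $x,y,z$ over $R$ identifies the group homology $H_*(A;M)$ with $\mathrm{Tor}_*^R(k,M)$; in particular $H_0(A;M)=k\otimes_RM$ and $H_1(A;M)=\mathrm{Tor}_1^R(k,M)$. Applying Lemma~1 of \cite{CEM} with $d=3$ to the non-zero finite-length module $M$ gives
\[
\dim_kH_1(A;M)\;\geq\;\dim_kH_0(A;M)+2.
\]

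Finally I would read a lower bound for $\beta_2(G;k)$ off the LHS spectral sequence $E^2_{p,q}=H_p(A;H_q(G';k))\Rightarrow H_{p+q}(G;k)$. Here $E^2_{p,0}=H_p(\mathbb{Z}^3;k)$ has dimensions $1,3,3,1$ for $p=0,1,2,3$, while $E^2_{0,1}=H_0(A;M)$ and $E^2_{1,1}=H_1(A;M)$. Put $m_0=\dim_kH_0(A;M)$. The five-term exact sequence together with the isomorphism $H_1(G;k)\cong H_1(A;k)$ (valid since $G/G'\cong\mathbb{Z}^3$ is torsion-free) shows that $d_2\colon E^2_{2,0}\to E^2_{0,1}$ is onto (forcing $m_0\leq3$), so $\dim E^\infty_{2,0}=3-m_0$; and since only $d_2\colon E^2_{3,0}\to E^2_{1,1}$ can affect $E_{1,1}$ and $\dim E^2_{3,0}=1$, we get $\dim E^\infty_{1,1}\geq\dim E^2_{1,1}-1$. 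Hence
\[
\beta_2(G;k)\;\geq\;\dim E^\infty_{2,0}+\dim E^\infty_{1,1}\;\geq\;(3-m_0)+\bigl(\dim H_1(A;M)-1\bigr)\;\geq\;(3-m_0)+(m_0+1)\;=\;4,
\]
so $\beta_2(G;k)>3$.

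I expect the main obstacle to be the middle step: one must verify carefully that the conjugation action genuinely makes $M$ a finite-length module over the power series ring --- this is precisely where nilpotence of $G$ enters, via the lower central series filtration --- and that the relevant $\mathrm{Tor}$ groups really are the entries $E^2_{0,1}$ and $E^2_{1,1}$ of the spectral sequence, so that Lemma~1 of \cite{CEM} feeds directly into the spectral sequence bookkeeping. Once the input inequality $\dim E^2_{1,1}\geq\dim E^2_{0,1}+2$ is in hand, the remaining argument is routine diagram-chasing in the first few columns of the spectral sequence.
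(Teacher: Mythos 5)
Your proposal is correct and follows essentially the same route as the paper: choose a prime field $k$ with $H_1(G';k)\neq0$, observe that nilpotence makes this a finite-length module over the completed group ring $k[[x,y,z]]$ of $G/G'$, apply Lemma~1 of \cite{CEM} to get $b_1\geq b_0+2$, and feed this into the LHS spectral sequence to obtain $\beta_2(G;k)\geq(3-b_0)+(b_1-1)\geq4$. The only cosmetic differences are that you identify $H_*(G/G';M)$ with $\mathrm{Tor}_*^R(k,M)$ via Koszul complexes where the paper invokes faithful flatness of completion, and you bound $b_0\leq3$ via surjectivity of $d^2_{2,0}$ where the paper simply writes $\max\{3-b_0,0\}$; both are sound.
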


\begin{proof}
Let $\{x,y,z\}$ be a fixed basis for $G/G'$.
If $G'\not=1$ then $G'/G''$ is non-zero, and is finitely generated.
Hence $A=H_1(G';k)\not=0$ for some prime field $k$.
Let $k\Lambda=k[G/G']$ and let $X=1-x$, $Y=1-y$ 
and $Z=1-z$.
Then $\mathcal{I}=(X,Y,Z)$ is the augmentation ideal in $k\Lambda$.
Since $G$ is nilpotent,  
the $k\Lambda$-module $A$ is annihilated 
by a power of $\mathcal{I}$, and $r=\dim_kA$ is finite.
Hence $A$ is a module of finite length over the $\mathcal{I}$-adic completion 
$R=\widehat{k\Lambda}$, 
which is a regular local ring of Krull dimension 3.
Moreover,  $A\cong{R}\otimes_{k\Lambda}A$.
Since completion is faithfully flat,
$H_i(G/G';A)=\mathrm{Tor}_i^{k\Lambda}(k,A)
\cong\mathrm{Tor}_i^R(k,A)$, for all $i$.
Let $b_i=\dim_k\mathrm{Tor}_i^R(k,A)$.
Then $b_0=\mathrm{dim}_kk\otimes_RA>0$, since $A\not=0$, and $b_1\geq{b_0+2}$,
by \cite[Lemma 1]{CEM}, since $R$ has Krull dimension $d=3$.

We may bound $\beta_2(G;k)$ below by means of the LHS spectral sequence 
for $G$ as an extension of $G/G'$ by $G'$.  
Since $\dim_\mathbb{Q}H_i(G/G';A)=b_i$, for all $i$,
the spectral sequence gives 
\[
\beta_2(G;k)\geq\max\{3-b_0,0\}+\max\{b_1-1,0\}
\geq\max\{3-b_0,0\}+b_0+1\geq4.
\]
This proves the Theorem.
\end{proof}

We note the following  corollaries.

\begin{cor}
Let $G$ be a finitely generated nilpotent group with torsion subgroup $T$
and such that $\beta_1(G)=3$.
Then the following are equivalent:
\begin{enumerate}
\item $G/T\cong\mathbb{Z}^3$;
\item$h(G)=3$;
\item$\beta_2(G)=\beta_1(G)=3$.
\end{enumerate}
\end{cor}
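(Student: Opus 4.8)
The plan is to prove the three implications $(1)\Rightarrow(2)$, $(2)\Rightarrow(3)$ and $(3)\Rightarrow(1)$, reducing throughout to the torsion-free case via Lemma \ref{standard}, which tells us that $H_i(G)\cong H_i(G/T)$ for all $i$, and in particular $\beta_i(G)=\beta_i(G/T)$. Since $\beta_1(G)=3$, the free abelian rank of $G/T$ is $3$, so $h(G)=h(G/T)\geq3$ with equality if and only if $(G/T)/(G/T)'$ has rank $3$ and $(G/T)'$ is finite — but a finitely generated torsion-free nilpotent group has torsion-free, hence trivial or infinite, derived subgroup, so $h(G)=3$ exactly when $(G/T)'=1$, i.e. $G/T\cong\mathbb{Z}^3$. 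This already gives $(1)\Leftrightarrow(2)$.

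For $(1)\Rightarrow(3)$: if $G/T\cong\mathbb{Z}^3$ then $\beta_i(G)=\beta_i(\mathbb{Z}^3)=\binom{3}{i}$, so $\beta_1(G)=\beta_2(G)=3$. For $(3)\Rightarrow(1)$: suppose $\beta_2(G)=\beta_1(G)=3$. Passing to $G/T$ we have a finitely generated torsion-free nilpotent group with $\beta_1=\beta_2=3$ and $(G/T)/(G/T)'\cong\mathbb{Z}^3$ (the abelianization is torsion-free of rank $3$). If $(G/T)'\neq1$ then Theorem \ref{beta=3} applied to $G/T$ gives $\beta_2(G/T;k)>3$ for some prime field $k$; taking $k=\mathbb{Q}$ would immediately contradict $\beta_2(G/T)=3$, but the theorem only guarantees some prime field, so I would instead note that $\beta_2(G/T;\mathbb{F}_p)\geq\beta_2(G/T;\mathbb{Q})=3$ always (by the Universal Coefficient argument as in Lemma \ref{hom bal}), and combine this with the sharper bound. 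Actually the cleanest route is: the proof of Theorem \ref{beta=3} produces, for the prime field $k$ witnessing $A=H_1((G/T)';k)\neq0$, the inequality $\beta_2(G/T;k)\geq4$; if that witnessing field is $\mathbb{Q}$ we are done, and if it is $\mathbb{F}_p$ we still get $\beta_2(G/T;\mathbb{F}_p)\geq4>3\geq\beta_2(G/T;\mathbb{Q})$ — wait, that's consistent. So the genuine point is that I want $\beta_2$ over $\mathbb{Q}$, and the theorem as literally stated does not force $G'=1$ when only rational Betti numbers are controlled.

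The resolution, and the main obstacle, is this: I must show that $(G/T)'\neq1$ forces $\beta_2(G/T;\mathbb{Q})>3$, not merely $\beta_2(G/T;k)>3$ for some $k$. This requires re-running the argument of Theorem \ref{beta=3} with $k=\mathbb{Q}$: if $(G/T)'\neq1$ then $(G/T)'/(G/T)''$ is a nontrivial finitely generated abelian group, so $A=H_1((G/T)';\mathbb{Q})=\mathbb{Q}\otimes(G/T)'/(G/T)''\neq0$ (a nontrivial torsion-free nilpotent group has infinite abelianization, so its derived quotient has positive rank after tensoring with $\mathbb{Q}$). Then the same finite-length module argument over $R=\mathbb{Q}[[x,y,z]]$, the Cohen-Macaulay bound \cite[Lemma 1]{CEM}, and the LHS spectral sequence give $\beta_2(G/T;\mathbb{Q})\geq4$, contradicting $(3)$. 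Hence $(G/T)'=1$, i.e. $G/T\cong\mathbb{Z}^3$. I would phrase the corollary's proof by quoting the torsion-free reduction, observing $(1)\Leftrightarrow(2)$ is elementary nilpotent group theory, $(1)\Rightarrow(3)$ is a computation with $\mathbb{Z}^3$, and $(3)\Rightarrow(1)$ follows from the rational version of Theorem \ref{beta=3} just described — the mild subtlety being to check that $A$ is nonzero already over $\mathbb{Q}$, which holds precisely because finitely generated torsion-free nilpotent groups have infinite abelianization.
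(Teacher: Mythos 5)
Your overall route is the same as the paper's: the equivalence of (1) and (2) and their implication of (3) are elementary, and the substance of $(3)\Rightarrow(1)$ is an appeal to Theorem \ref{beta=3}. The paper's own proof is a single sentence, and you have correctly put your finger on the point it glosses over: the Theorem as stated only produces $\beta_2(G;k)>3$ for \emph{some} prime field $k$, whereas (3) controls only the rational Betti number. Your resolution is the right one: if $(G/T)'\not=1$ then, being a nontrivial finitely generated torsion-free nilpotent group, it has infinite abelianization, so $A=H_1((G/T)';\mathbb{Q})\not=0$ and the proof of Theorem \ref{beta=3} runs with $k=\mathbb{Q}$, giving $\beta_2(G;\mathbb{Q})\geq4$. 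That is a genuine sharpening of the paper's one-line justification.

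One intermediate claim in your write-up is false as stated: you assert that the abelianization of $G/T$ is torsion-free of rank $3$. A finitely generated torsion-free nilpotent group can have torsion in its abelianization (for instance $\langle x,y,z\mid z\ \text{central},\ [x,y]=z^2\rangle\times\mathbb{Z}$ is torsion-free with abelianization $\mathbb{Z}^3\oplus\mathbb{Z}/2\mathbb{Z}$), so the hypothesis $G/G'\cong\mathbb{Z}^3$ of Theorem \ref{beta=3} need not hold for $G/T$. This is easily repaired in either of two ways: pass to the finite-index subgroup $J$ of Lemma \ref{free abelian}, which is generated by $3$ elements and has the same rational homology, so that $J/J'$ is a rank-$3$ abelian group on $3$ generators and hence is $\mathbb{Z}^3$, and apply the (rational form of the) Theorem to $J$ to conclude $J\cong\mathbb{Z}^3$, whence $h(G)=3$ and (1) follows; or observe that over $\mathbb{Q}$ a finite summand of the abelianization contributes nothing to the completion of the group ring at the augmentation ideal, which is still $\mathbb{Q}[[x,y,z]]$ of Krull dimension $3$, so the Theorem's proof goes through verbatim. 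With that patch your argument is complete.
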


\begin{proof}
Clearly (1) and (2) are equivalent and imply (3),
while if (3) holds then $G'$ is finite,  and so (1) holds, by the Theorem.
\end{proof}

\begin{cor}
If $G$ is a  nilpotent group with a balanced presentation and such that  
$\beta_1(G)=3$ then $G\cong\mathbb{Z}^3$.
\qed
\end{cor}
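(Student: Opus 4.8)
The plan is to use the balanced presentation only through the fact (noted just before Lemma \ref{hom bal}) that such a $G$ is homologically balanced, so that $\beta_2(G;k)\leq\beta_1(G;k)$ for every prime field $k$; from this I would force first that $G/G'\cong\mathbb{Z}^3$ and then that $G'=1$.

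First I would pin down the abelianization. Set $d=\max\{\beta_1(G;\mathbb{F}_p)\mid p~\mathrm{prime}\}$. By Lemma \ref{min gen} the group $G$ can be generated by $d$ elements, and $d\geq\beta_1(G;\mathbb{Q})=3$. Suppose $d\geq4$, and choose a prime $p$ with $\beta_1(G;\mathbb{F}_p)=d$. Since $G$ is nilpotent, can be generated by $d$ elements with $d=\beta_1(G;\mathbb{F}_p)$, and is none of $1$, $\mathbb{Z}$, $\mathbb{Z}^2$, Lubotzky's inequality \cite{Lu83} (recalled above) gives $\beta_2(G;\mathbb{F}_p)>\frac{d^2}{4}\geq d=\beta_1(G;\mathbb{F}_p)$, contradicting homological balance. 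Hence $d=3$, so $\beta_1(G;\mathbb{F}_p)=3$ for every prime $p$; together with $\beta_1(G;\mathbb{Q})=3$ this forces the torsion subgroup of the finitely generated abelian group $G/G'$ to vanish, and so $G/G'\cong\mathbb{Z}^3$.

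It then remains to prove $G'=1$. If $G'\neq1$, then Theorem \ref{beta=3}, applied using the now-established isomorphism $G/G'\cong\mathbb{Z}^3$, yields a prime field $k$ with $\beta_2(G;k)>3=\beta_1(G;k)$, again contradicting homological balance. Hence $G$ is abelian, and $G=G/G'\cong\mathbb{Z}^3$.

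I expect the only delicate point to be the treatment of torsion. Theorem \ref{beta=3} removes torsion from $G'$ once the abelianization is free abelian, but it is silent about torsion in $G/G'$ itself, and over $\mathbb{Q}$ a balanced presentation yields nothing beyond $\beta_2(G;\mathbb{Q})\leq\beta_1(G;\mathbb{Q})$; this is why one must pass to mod-$p$ coefficients and invoke Lubotzky's bound relating the minimal number of generators, $\beta_1(G;\mathbb{F}_p)$ and $\beta_2(G;\mathbb{F}_p)$. Alternatively, Lemma \ref{hom bal} and the preceding corollary give $G/T\cong\mathbb{Z}^3$ for $T$ the torsion subgroup of $G$, but excluding the possibility $T\neq1$ still needs exactly these two ingredients.
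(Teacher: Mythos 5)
Your proof is correct and follows exactly the route the paper intends (the paper marks this corollary \qed, leaving it as an immediate assembly of the preceding material): homological balance over all prime fields plus Lubotzky's bound and Lemma \ref{min gen} force $G/G'\cong\mathbb{Z}^3$, and then Theorem \ref{beta=3} kills $G'$. Your closing remark about torsion correctly identifies why the mod-$p$ step is indispensable rather than optional.
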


Freedman, Hain and Teichner have also shown that
$\beta_2(G)>\frac14\beta_1(G)^2$ (for nilpotent groups $G$ such that $h(G)>2$),
and have used the Dwyer filtration \cite{Dw75}
to refine this estimate for certain groups \cite{FHT}.
The following assertion is a paraphrase of one of their main results. 

\begin{thm}
 \cite[Theorem 2]{FHT}
If $G$ is a finitely generated nilpotent group such that $h(G)>2$ and 
$G/\gamma_rG\cong{F(\beta)/\gamma_rF(\beta)}$ for some $r\geq2$
then $\beta_2(G)>\frac{(r-1)^{r-1}}{r^r}\beta_1(G)^r$.
\qed
\end{thm}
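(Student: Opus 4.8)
The plan is to run a Golod--Shafarevich estimate, organised by Dwyer's filtration of $H_2$, and then optimise a one-variable polynomial to produce the constant, which will turn out to be $\max_{0<t<1}t(1-t)^{r-1}=\tfrac{(r-1)^{r-1}}{r^r}$ (the maximum occurring at $t=1/r$). We may assume $G$ is torsion-free (Lemma~\ref{standard}); write $\beta=\beta_1(G)$, and note $\beta\geq2$ since $h(G)>2$ and a nilpotent group with cyclic abelianisation is cyclic. Choose $f\colon F(\beta)\to G$ inducing an isomorphism on rational $H_1$, and let $\mathbf L=\bigoplus_n(\gamma_nG/\gamma_{n+1}G)\otimes\mathbb Q$ be the rational lower central Lie algebra of $G$, so $\dim\mathbf L=h(G)$ with $\mathbf L_1=\mathbb Q^\beta$; likewise let $\mathcal L$ be the free Lie algebra on $\beta$ generators. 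Then $\beta_i(G)=\dim_\mathbb Q H_i(\mathfrak g)$ for the Mal'cev Lie algebra $\mathfrak g$, and the hypothesis $G/\gamma_rG\cong F(\beta)/\gamma_rF(\beta)$ says exactly that the natural surjection $\mathcal L\twoheadrightarrow\mathbf L$ is an isomorphism in degrees $<r$ — equivalently, that the graded algebra $U(\mathbf L)$ (the associated graded of $\mathbb Q[G]$ for the augmentation filtration) is generated by $\beta$ elements with all defining relations in degrees $\geq r$. Its Hilbert series $H(t)=\prod_{i\geq1}(1-t^i)^{-c_i}$, $c_i=\dim\mathbf L_i$, is finite and strictly positive on $(0,1)$.

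The main step is to establish positivity, on $(0,1)$, of the relation polynomial of $G$. Golod--Shafarevich applied to $U(\mathbf L)$ gives $H(t)\bigl(1-\beta t+\sum_{i\geq r}b_it^i\bigr)\succeq1$, where $b_i$ is the number of degree-$i$ generators of $K=\ker(\mathcal L\to\mathbf L)$; since $H(t)>0$ on $(0,1)$ this forces $1-\beta t+\sum_{i\geq r}b_it^i>0$ there. However, $\sum_ib_i=\dim H_2(\mathbf L)$ is not literally $\beta_2(G)=\dim H_2(\mathfrak g)$: one has only $\dim H_2(\mathfrak g)\leq\dim H_2(\mathbf L)$ in general, because the spectral sequence $H_\ast(\mathbf L)\Rightarrow H_\ast(\mathfrak g)$ of the lower central filtration need not degenerate. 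This is where Dwyer's filtration \cite{Dw75} $H_2(G;\mathbb Q)=\phi_r(G)\supseteq\phi_{r+1}(G)\supseteq\cdots\supseteq0$ enters: writing $\nu_n=\operatorname{rank}(\phi_n(G)/\phi_{n+1}(G))$ one has $\beta_2(G)=\sum_{n\geq r}\nu_n$, and $\nu_r$ is pinned down by the five-term sequence of $1\to\gamma_rG\to G\to G/\gamma_rG\to1$, which shows $\operatorname{coker}(H_2(G)\to H_2(G/\gamma_rG))\cong\mathbf L_r$, so $\nu_r=\dim\mathcal L_r-\dim\mathbf L_r=\dim K_r$. The hard part of the whole argument is to show, degree by degree up the filtration, that the $\nu_n$ obey the same lower bounds that Golod--Shafarevich imposes on the $b_i$ — that is, that no relations are ``lost'' to higher Massey products — so that one actually obtains
\[
1-\beta t+{\textstyle\sum_{n\geq r}}\nu_nt^n>0\qquad\text{for all }t\in(0,1),
\]
and it is here, together with the passage between $G$, its pro-nilpotent completion and $\mathbf L$, that the real care is needed.

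Granting that, the constant falls out by an optimisation. On $(0,1)$ we have $\sum_{n\geq r}\nu_nt^n\leq t^r\sum_{n\geq r}\nu_n=t^r\beta_2(G)$, so $\beta_2(G)\,t^r>\beta t-1$ for every $t\in(1/\beta,1)$. The function $t\mapsto(\beta t-1)t^{-r}$ attains its maximum on $(1/\beta,1)$ at the interior point $t^\ast=\tfrac r{(r-1)\beta}$ (interior because $\beta\geq2$ and $r\geq3$), where it equals $\tfrac{(r-1)^{r-1}}{r^r}\beta^r$; since the maximiser is interior the resulting inequality $\beta_2(G)>\tfrac{(r-1)^{r-1}}{r^r}\beta^r$ is strict. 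The remaining cases are degenerate: $\beta=1$ would make $G$ cyclic, hence $h(G)\leq1$, contrary to hypothesis; and for $r=2$ the assertion is just $\beta_2(G)>\tfrac14\beta_1(G)^2$, which holds for every finitely generated nilpotent group with $h(G)>2$. Thus the curious constant $\tfrac{(r-1)^{r-1}}{r^r}$ is simply $\max_{0<t<1}t(1-t)^{r-1}$, reached at $t=1/r$.
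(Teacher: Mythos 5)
The paper does not actually prove this statement: it is quoted (as ``a paraphrase of one of their main results'') from \cite[Theorem 2]{FHT} and stated without proof, so there is no internal argument to compare against. Judged on its own terms, your proposal reconstructs the right framework --- Dwyer's filtration of $H_2$ combined with a Golod--Shafarevich estimate on the associated graded of the group ring, followed by the observation that $\max_{0<t<1}t(1-t)^{r-1}=\tfrac{(r-1)^{r-1}}{r^r}$ at $t=1/r$ --- and the closing optimisation (maximising $(\beta t-1)t^{-r}$ at $t^\ast=\tfrac{r}{(r-1)\beta}$, checking interiority for $r\geq3$, $\beta\geq2$, and disposing of $r=2$ via the known bound $\beta_2(G)>\tfrac14\beta_1(G)^2$) is carried out correctly.

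However, there is a genuine gap, and you flag it yourself. The entire substance of the theorem is the positivity of $1-\beta t+\sum_{n\geq r}\nu_nt^n$ on $(0,1)$, where the $\nu_n$ are the ranks of the Dwyer filtration quotients of $H_2(G;\mathbb{Q})$; at precisely this point you write ``Granting that'' and move on. The Golod--Shafarevich inequality you invoke controls the degree-by-degree relation counts of the graded algebra, i.e.\ the graded pieces of $H_2(\mathbf{L})$ for $\mathbf{L}$ the rational lower central Lie algebra, and as you yourself note one only has $\dim H_2(\mathfrak{g})\leq\dim H_2(\mathbf{L})$ in general, which is an inequality in the wrong direction. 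Transferring the Golod--Shafarevich lower bounds from the relation counts $b_i$ to the Dwyer ranks $\nu_n$ --- showing that no relations are ``lost'' in passing from the associated graded to the group --- is exactly the content of \cite[Theorem 2]{FHT}, and your sketch assumes it rather than proving it. Beyond the bottom degree, where the five-term sequence does pin down $\nu_r=\dim K_r$ as you say, no mechanism is supplied. So the proposal is a correct and well-organised outline of the FHT strategy, with its one hard step left open; it is not a proof.
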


Setting $r=2$ recovers the inequality $\beta_2(G)>\frac14\beta_1(G)^2$,
since we may assume that $G/G'\cong\mathbb{Z}^\beta$,
by Lemma \ref{free abelian}.

In seeking examples of nilpotent groups $G$ with $\beta_2(G)=\beta_1(G)=2$,
we may focus on groups that can be generated by 2 elements,
by Lemma \ref{free abelian}.
If $G$ is such a group then $G/\gamma_2G=F(2)/\gamma_2F(2)$,
and $G/\gamma_rG$ is a quotient of $F(2)/\gamma_rF(2)$,
for all $r>2$.
If $G/\gamma_3G$ is a proper quotient of $F(2)/\gamma_3F(2)$
then $\gamma_2G/\gamma_3G$ is finite,  
since $\gamma_2(F(2)/\gamma_3F(2)\cong\mathbb{Z}$.
Hence $G'=\gamma_2G$ is finite and so $\beta_2(G)=1$.
Thus we may assume that $G/\gamma_3G\cong{F(2)/\gamma_3F(2)}$.
On the other hand,  if $G/\gamma_5G\cong{F(2)/\gamma_5F(2)}$ 
then setting $\beta=2$ and $r=5$ in the inequality of
\cite[Theorem 2]{FHT} gives $\beta_2(G)>\beta_1(G)$.
Thus we may assume that $\gamma_4G/\gamma_5G$ has rank $\leq2$.
(The argument for \cite[Theorem 2]{FHT} may be tweaked to show that 
$\gamma_4G/\gamma_5G$ must have rank 1.)

The authors of \cite{FHT} conjecture that there is no nilpotent
group $G$ such that $G/\gamma_4G\cong{F(2)/\gamma_4F(2)}$ and
$\beta_2(G)=\beta_1(G)$.

\section{hirsch length $\leq5$}

As our primary interest is in the torsion-free case, 
and the cases with $\beta\geq3$ or $h\leq3$ have been settled,
We shall assume henceforth that $\beta=2$ and $h\geq4$.
The next theorem was the origin of this paper.

\begin{theorem}
\label{h=4}
There is just one torsion-free nilpotent group of Hirsch length $4$
which is homologically balanced.
\end{theorem}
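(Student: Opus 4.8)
The plan is to exploit the reductions already assembled in the excerpt: by Lemma \ref{free abelian} and the discussion at the end of \S2 we may assume $G$ is generated by $2$ elements, $\beta_1(G)=2$, and $G/\gamma_3G\cong F(2)/\gamma_3F(2)$. Since $h(G)=4$ and $h(F(2)/\gamma_3F(2))=3$, the group $\gamma_3G$ has Hirsch length $1$, hence (torsion-freeness) $\gamma_3G\cong\mathbb{Z}$ and $G$ has nilpotency class $3$; note $\gamma_3G\leq\zeta G\cap G'$ since it is the last nontrivial term of the lower central series. So $\overline{G}:=G/\gamma_3G\cong F(2)/\gamma_3F(2)$, the discrete Heisenberg group, and $G$ is a central extension
\[
1\to\mathbb{Z}\to G\to \overline{G}\to 1.
\]
First I would pin down $\beta_2(G)$. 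For $\overline{G}=F(2)/\gamma_3F(2)$ one computes $\beta_1(\overline{G})=2$, $\beta_2(\overline{G})=2$, and (by Poincaré duality and $\chi=0$ for the $PD_3$-group $\overline{G}$) $\beta_3(\overline{G})=1$. The Gysin sequence for the circle extension (the $Z\cong\mathbb{Z}$ case of Lemma \ref{E^2_{1,1}}) then gives $1=\beta_2(\overline G)-1\leq\beta_2(G)\leq\beta_2(\overline G)-1+\beta_1(\overline G)=3$. Homological balance forces $\beta_2(G)=\beta_1(G)=2$, so I must extract from the Gysin sequence exactly which extension classes $e\in H^2(\overline{G};\mathbb{Z})$ (equivalently $H^2(\overline{G};\mathbb{Q})$, after handling torsion) yield $\beta_2(G)=2$. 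Concretely, $\beta_2(G)=2$ iff the cup-product map $\cup e\colon H^1(\overline G;\mathbb{Q})\to H^3(\overline G;\mathbb{Q})$ is nonzero (equivalently the map $e\cap-\colon H_2(\overline G)\to H_0(\overline G)$ is onto, which is automatic since $\gamma_3 G\le G'$, and simultaneously $e\cup-$ kills enough of $H^1$).

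The heart of the argument is therefore a computation in $H^*(\overline{G};\mathbb{Q})$: write $H^1=\langle a,b\rangle$, $H^2=\langle a\beta', b\alpha'\rangle$-type generators with the single relation $a\cup b=0$ coming from $[x,y]$ being a commutator, and $H^3=\langle \omega\rangle$ one-dimensional. A class $e\in H^2$ is $e=\lambda\,(a\cup c_1)+\mu\,(b\cup c_2)$ for suitable $c_i\in H^1$; the condition that $\cup e\colon H^1\to H^3$ has rank $1$ (forcing $\beta_2(G)=2$, not $3$) should cut out, up to the action of $\mathrm{Aut}(\overline G)=GL_2(\mathbb{Z})$ on $H^2$ and scaling, a single orbit of "nondegenerate" classes, while the degenerate ones ($\cup e=0$) would give $\beta_2(G)=3$ and the zero class gives $G=\overline G\times\mathbb{Z}$ with $\beta_2=3$ as well, both excluded. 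I expect the surviving orbit to be represented by the class whose corresponding extension is the free nilpotent group $F(2)/\gamma_4F(2)$ — no, that has $h=5$; rather, it is the (unique) quotient of $F(2)/\gamma_4F(2)$ by a suitable $\mathbb{Z}$ in $\gamma_4$, i.e. the torsion-free class-$3$ group on $2$ generators with $\gamma_3\cong\mathbb{Z}$ and $\gamma_4=1$ that is \emph{not} a direct product. One then checks this group is indeed homologically balanced (it has a balanced presentation $\langle x,y\mid [x,[x,y]],\,[y,[x,y]]\text{-type relation making }[y,[x,y]]=[x,[x,y]]^{?}\rangle$, or more cleanly one verifies $\beta_2(G;F)=2$ for all $F$ via Lemma \ref{hom bal} and the integral Gysin sequence, confirming $H_2(G;\mathbb{Z})$ is torsion-free).

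The main obstacle is the uniqueness-up-to-isomorphism step: two different extension classes $e,e'$ can give isomorphic groups $G,G'$ precisely when they lie in the same $\mathrm{Out}(\overline G)$-orbit on $H^2(\overline G;\mathbb{Z})$ (together with $\pm1$ scaling on the central $\mathbb{Z}$), so I need the orbit structure of $GL_2(\mathbb{Z})$ (acting via its standard action on $H^1$, hence on $H^2$) on the relevant classes, and I must rule out that a "nondegenerate" $e$ and the product class lie in the same orbit — they cannot, since the nondegeneracy of $\cup e$ is an isomorphism invariant of $G$ (it is detected by $\beta_2(G)$). Handling the torsion in $H^2(\overline G;\mathbb{Z})$ and in a general torsion-free $G$ requires a little care — but since we have reduced to $\gamma_3 G\cong\mathbb{Z}$ and $\overline G$ torsion-free with $H^2(\overline G;\mathbb{Z})$ torsion-free (Heisenberg: $H_1\cong\mathbb{Z}^2$, $H_2\cong\mathbb{Z}^2$, $H_3\cong\mathbb{Z}$), this is routine. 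The conclusion is that there is exactly one such $G$, completing the proof of Theorem \ref{h=4}.
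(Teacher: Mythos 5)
Your opening reductions (homological balance forces $\beta_1(G;F)=2$ for all $F$ via the $PD_4$ and $\chi=0$ argument, hence $G/G'\cong\mathbb{Z}^2$, $G$ is $2$-generated, $G/\gamma_3G\cong F(2)/\gamma_3F(2)$, and $\gamma_3G\cong\mathbb{Z}$ is central) agree with the paper's, and recasting the remaining classification as an orbit problem for $\mathrm{Out}(\overline{G})$ acting on $H^2(\overline{G};\mathbb{Z})\cong\mathbb{Z}^2$ is a viable route. But the execution has two genuine gaps. First, the ``heart of the argument'' as you set it up cannot start: you parametrize extension classes as $e=\lambda(a\cup c_1)+\mu(b\cup c_2)$ with $a,b,c_i\in H^1(\overline{G})$, but for the Heisenberg group the \emph{entire} cup product $H^1\otimes H^1\to H^2$ vanishes ($\wedge^2H^1$ is one-dimensional, spanned by $a\cup b$, which is zero for exactly the reason you cite), so every decomposable class is zero and $H^2(\overline{G};\mathbb{Z})\cong\mathrm{Hom}(\gamma_3F(2)/\gamma_4F(2),\mathbb{Z})$ is spanned by indecomposables. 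Second, the uniqueness step --- that the relevant classes in $H^2(\overline{G};\mathbb{Z})\cong\mathbb{Z}^2$ form a single orbit under $\mathrm{Out}(\overline{G})$ and $\pm1$ on the centre --- is precisely what you flag as ``the main obstacle'' and then leave in the conditional (``should cut out a single orbit'', ``I need the orbit structure''). That computation is the whole content of the theorem: one must identify the relevant classes as the primitive ones (via the five-term sequence, primitivity of $e$ is equivalent to $G/G'\cong\mathbb{Z}^2$), identify the $\mathrm{Out}(\overline{G})$-action on $H_2(\overline{G})\cong\gamma_3F(2)/\gamma_4F(2)\cong V\otimes\det$ as the determinant-twisted standard $GL_2(\mathbb{Z})$-action, and check transitivity on primitive vectors. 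None of this is done.

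Two further remarks. The Gysin/cup-product analysis of which $e$ give $\beta_2(G)=2$ is superfluous: once $G/G'\cong\mathbb{Z}^2$, homological balance follows from your own first observation ($\beta_2(G;F)=2(\beta_1(G;F)-1)$), and in any case Poincar\'e duality for the $PD_3$-group $\overline{G}$ makes $\cup\,e\colon H^1\to H^3$ nonzero for every nonzero $e\in H^2(\overline{G};\mathbb{Q})$, while the degenerate case $e=0$ gives $\beta_2=4$ (with $\beta_1=3$), not $3$. For comparison, the paper carries out your orbit argument concretely and completely: it writes the presentation $\langle x,y,u,z\mid u=[x,y],\ [x,u]=z^a,\ [y,u]=z^b,\dots\rangle$, notes that $G/G'\cong\mathbb{Z}^2$ forces $(a,b)=1$, and normalizes $(a,b)$ to $(1,0)$ by a change of basis of $F(2)$ --- the pair $(a,b)$ \emph{is} the extension class, and the base change \emph{is} the $\mathrm{Out}(\overline{G})$-action you need.
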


\begin{proof}
Let $N$ be a torsion-free nilpotent group of Hirsch length $4$.
If  $F$ is a field then $\beta_1(N;F)\geq2$, 
since $N$ is not virtually cyclic, and $\beta_2(N;F)=2(\beta_1(N;F)-1)$, 
since $N$ is an orientable $PD_4$-group and $\chi(N)=0$.
Hence if $\beta_2(N;F)\leq\beta_1(N;F)$ then $\beta_1(N;F)=2$.
If $N$ is homologically balanced this holds for all fields $F$,
and so $N/N'\cong\mathbb{Z}^2$.
Therefore $N$ can be generated by 2 elements.

Since $N$ is a quotient of $F(2)$,
the quotient $\gamma_2N/\gamma_3N$ is cyclic.
If $\gamma_2N/\gamma_3N$ were finite then all subsequent 
factors of the lower central series would be finite \cite[5.2.5]{Rob},
and so $h(N)=2$, contradicting the hypothesis $h(N)=4$. 
Therefore $\gamma_2N/\gamma_3N\cong\mathbb{Z}$ 
and so $N/\gamma_3N\cong{F(2)/\gamma_3F(2)}$.
Hence $\gamma_3N\cong\mathbb{Z}$.
Since $N$ is nilpotent, it follows that $\gamma_3N$ is central in $N$.
Hence $N$ has a presentation
\[
\langle{x,y,u,z}\mid{u=[x,y]},~[x,u]=z^a,~[y,u]=z^b,~xz=zx,~yz=zy\rangle,
\]
in which $x,y$ represent a basis for $N/\gamma_3N$ and $z$ represents
a generator for $\gamma_3N$.
Since $N/N'\cong\mathbb{Z}^2$ we must have $(a,b)=1$,
and after a change of basis for $F(2)$ we may assume that $a=1$ and $b=0$.
The relation $yz=zy$ is then a consequence of the others, 
and so the presentation simplifies to
$\langle{x,y}\mid [x,[x,[x,y]]]=[y,[x,y]]=1\rangle.$
\end{proof} 

An alternative argument uses the observations that $N'\cong\mathbb{Z}^2$ 
(since it is torsion-free nilpotent and $h(N')=2$) and that the image of $N$ 
in $\mathrm{Aut}(N')$ preserves the flag $1<\gamma_3N<N'$.
Since $N$ is nilpotent and $N/N'\cong\mathbb{Z}^2$ this image is infinite cyclic, 
and so $N\cong{C}\rtimes\mathbb{Z}$, 
where $C$ is the centralizer of $N'$ in $N$.
Moreover,  $C\cong\mathbb{Z}^3$, since $C/N'\cong\mathbb{Z}$.
Hence $N\cong\mathbb{Z}^3\rtimes_A\mathbb{Z}$,
where $A\in{SL(3,\mathbb{Z})}$ is a triangular matrix.
It is not hard to see that since $N/N'$ is torsion-free of rank 2
there is a basis for $C$ such that
\[
A=
\left(
\begin{matrix}
1& 0 & 0\\
1 & 1 & 0\\
0& 1 & 1
\end{matrix}
\right)\in{SL(3,\mathbb{Z})}.
\]
Hence $N$ has the 2-generator balanced presentation
\[
\langle{t,u}\mid[t,[t,[t,u]]]=[u,[t,u]]=1\rangle,
\]
in which $u$ corresponds to the column vector $(1,0,0)^{tr}$ in $\mathbb{Z}^3$.

\begin{cor}
\label{2 gen cor}
The group $N$ described in Theorem \ref{h=4} is the only torsion-free nilpotent group of Hirsch length $4$ which can be generated by $2$ elements.
\end{cor}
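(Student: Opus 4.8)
The plan is to show that any torsion-free nilpotent group $M$ of Hirsch length $4$ that can be generated by $2$ elements is already the group $N$ of Theorem \ref{h=4}. The first step is to observe that if $M$ is generated by $2$ elements then $M/M'$ is a quotient of $\mathbb{Z}^2$; since $M$ is torsion-free and $h(M)=4$, the subgroup $M'$ is infinite (otherwise $h(M)=h(M/M')\leq 2$ by \cite[5.2.5]{Rob}), so $h(M/M')\in\{1,2\}$. If $h(M/M')=1$, then as $M$ is torsion-free nilpotent, $M/M'$ torsion-free forces $M/M'\cong\mathbb{Z}$, and then $M$ has nilpotency class $\leq h(M)+1-h(M/M')=4$ — but more usefully, a nilpotent group with cyclic abelianization is itself cyclic, contradicting $h(M)=4$. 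Hence $M/M'\cong\mathbb{Z}^2$, and we are back in exactly the situation of the proof of Theorem \ref{h=4}.

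From here I would simply rerun the argument of Theorem \ref{h=4} verbatim: $\gamma_2M/\gamma_3M$ is cyclic (as $M$ is a quotient of $F(2)$), it cannot be finite (else $h(M)=2$ as before), so $\gamma_2M/\gamma_3M\cong\mathbb{Z}$ and $M/\gamma_3M\cong F(2)/\gamma_3F(2)$; counting Hirsch length forces $\gamma_3M\cong\mathbb{Z}$, which is central since $M$ is nilpotent; and then $M$ has a presentation
\[
\langle x,y,u,z \mid u=[x,y],\ [x,u]=z^a,\ [y,u]=z^b,\ xz=zx,\ yz=zy\rangle
\]
with $(a,b)=1$ (because $M/M'\cong\mathbb{Z}^2$ is torsion-free of rank $2$). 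A change of basis for $F(2)$ normalizes $(a,b)$ to $(1,0)$, and the presentation collapses to $\langle x,y\mid [x,[x,[x,y]]]=[y,[x,y]]=1\rangle$, which is precisely $N$.

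The only genuinely new point — and the one I expect to require the most care — is the very first reduction: ruling out $M/M'\cong\mathbb{Z}$. In Theorem \ref{h=4} the hypothesis "$N$ homologically balanced" did this work by forcing $\beta_1=2$; here the hypothesis has been replaced by "generated by $2$ elements", so one must instead invoke the elementary fact that a nilpotent group whose abelianization is cyclic is itself cyclic (apply $\gamma_2M/\gamma_3M$ being a quotient of $M/M'$, so trivial, whence $\gamma_2M=\gamma_3M=\cdots$ and $M=M'$, forcing $M$ abelian and cyclic). Everything after that is an unchanged repetition of the Theorem's proof, so the corollary is really just the remark that the homological-balance hypothesis was only used to pin down the number of generators. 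I would write the proof as: reduce to $M/M'\cong\mathbb{Z}^2$ by the cyclic-abelianization remark, then "the argument of Theorem \ref{h=4} now applies verbatim."
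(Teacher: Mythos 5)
Your overall strategy is sound, but your route differs from the paper's at the second step, and it contains one incorrect claim. On the route: the paper does not re-run the internal argument of Theorem \ref{h=4}. Once $G/G'\cong\mathbb{Z}^2$ is known it observes that $\beta_1(G;F)=2$ for every field $F$, so Poincar\'e duality in dimension $4$ together with $\chi(G)=0$ gives $\beta_2(G;F)=2(\beta_1(G;F)-1)=2$; hence $G$ is homologically balanced and Theorem \ref{h=4} applies as a black box. Your alternative --- re-entering the theorem's proof at the point where the abelianization has been pinned down --- also works, since everything after that point uses only that the group is torsion-free nilpotent of Hirsch length $4$ with abelianization $\mathbb{Z}^2$; it just duplicates work that the citation avoids.

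The genuine problem is in your first reduction. The assertion that ``$M$ torsion-free nilpotent'' forces ``$M/M'$ torsion-free'' is false: in the integral Heisenberg group $\langle a,b,c\mid c=[a,b]~\mathrm{central}\rangle$ the subgroup generated by $a$, $b^2$ and $c$ is torsion-free nilpotent with abelianization $\mathbb{Z}^2\oplus\mathbb{Z}/2\mathbb{Z}$. Consequently your case analysis omits the possibility $M/M'\cong\mathbb{Z}\oplus\mathbb{Z}/n\mathbb{Z}$ with $n\geq2$, which has Hirsch length $1$ but is not cyclic, so the ``cyclic abelianization implies cyclic'' fact does not dispose of it. The repair is the mechanism you already use a paragraph later: choose generators $x,y$ of $M$ with $y^n\in M'$; then $[x,y]^n\equiv[x,y^n]\equiv1 \bmod \gamma_3M$, so $\gamma_2M/\gamma_3M$ (which is cyclic, generated by the image of $[x,y]$) is finite, hence all subsequent lower central factors are finite by \cite[5.2.5]{Rob}, giving $h(M)=h(M/M')=1$ and contradicting $h(M)=4$. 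This is exactly the content of the opening sentence of the paper's own proof, that a $2$-generated nilpotent group has either $G/G'\cong\mathbb{Z}^2$ or $h(G)\leq1$. With that step corrected, your argument is complete.
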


\begin{proof}
If $G$ is a  nilpotent group which can be generated by 2 elements 
then either $G/G'\cong\mathbb{Z}^2$ or $h(G)\leq1$.
Hence if $h(G)=4$ then $\beta_1(F)=2$ for all fields $F$.
Since $G$ is an orientable $PD_4$-group and $\chi(G)=0$ it follows that 
$G$ is homologically balanced, and so $G\cong{N}$.
\end{proof}

This group is a quotient of 
$\widetilde{F}=F(2)/\gamma_4F(2)$ by 
a maximal infinite cyclic subgroup of $\zeta\widetilde{F}$.
The quotient of $\widetilde{F}$ by the subgroup generated by the $p$th power 
of a non-trivial central element is  nilpotent of Hirsch length 4, 
and has a finite normal subgroup of order $p$.
Thus the condition on torsion is necessary for this corollary.
We do not know whether every homologically balanced nilpotent group 
of Hirsch length 4 is torsion-free.

We remark here that there is just one indecomposable nilpotent 
Lie algebra of dimension 4.
This derives from the Lie group $G_4$ of \cite{Ni83}.
(In the context of Thurston geometries, this is also known as $Nil^4$ 
\cite[Chapter 7]{Hi}.)

\begin{theorem}
\label{h5b2}
Let $G$ be a finitely generated nilpotent group 
such that $\beta_1(G)=2$ and $h(G)=5$.
Then $\beta_2(G)=3$.
\end{theorem}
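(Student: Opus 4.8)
The plan is to put $G$ into a normal form and then split into two cases according to its nilpotency class; one case will be dispatched by identifying $G$ outright, and the other by a Gysin-sequence computation whose crux is a cup-product vanishing together with Poincar\'e duality. Replacing $G$ by $G/T$, where $T$ is the maximal finite normal subgroup, preserves $h(G)$ and the rational homology by Lemma \ref{standard}, so I may assume $G$ is torsion-free; then Lemma \ref{free abelian} lets me replace $G$ by a finite-index subgroup generated by $\beta_1(G)=2$ elements with the same rational homology. Since this subgroup is $2$-generated with $\beta_1=2$ we have $G/G'\cong\mathbb{Z}^2$. As $G$ is a quotient of $F(2)$, the group $\gamma_2G/\gamma_3G$ is cyclic, and it is not finite --- otherwise $\gamma_2G$ would be finite by \cite[5.2.5]{Rob}, forcing $h(G)=2$. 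Hence $\gamma_2G/\gamma_3G\cong\mathbb{Z}$, so $h(G/\gamma_3G)=3$, and $G/\gamma_3G$ is a quotient of the torsion-free nilpotent group $F(2)/\gamma_3F(2)$ of the same Hirsch length; since a torsion-free nilpotent group has no nontrivial finite normal subgroup, $G/\gamma_3G\cong F(2)/\gamma_3F(2)$. Thus $\gamma_3G$ is a finitely generated torsion-free nilpotent group of Hirsch length $2$, so $\gamma_3G\cong\mathbb{Z}^2$. Finally $\gamma_4G=[\gamma_3G,G]\leq\gamma_3G$ cannot have rank $2$: it would then have finite index in $\gamma_3G$, making $\gamma_iG/\gamma_{i+1}G$ finite for all $i\geq3$ by \cite[5.2.5]{Rob}, and hence $\gamma_4G$ finite. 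So $\gamma_4G$ has rank $0$ or $1$.

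\emph{Nilpotency class $3$.} If $\gamma_4G=1$, then $G$ is a quotient of $F(2)/\gamma_4F(2)$; both have Hirsch length $5$, and since $F(2)/\gamma_4F(2)$ is torsion-free the kernel is trivial, so $G\cong F(2)/\gamma_4F(2)$. Then $\beta_2(G)$ equals the rank of $H_2(F(2)/\gamma_4F(2);\mathbb{Z})\cong\gamma_4F(2)/\gamma_5F(2)$, which the Witt formulae give as $\frac14(2^4-2^2)=3$.

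\emph{Nilpotency class $4$.} Otherwise $\gamma_4G\cong\mathbb{Z}$, and a similar cascade argument shows $\gamma_5G=[\gamma_4G,G]$ has rank $0$, hence is trivial, so $\gamma_4G$ is an infinite cyclic central subgroup of $G'$. Put $Q=G/\gamma_4G$; this is a $2$-generator nilpotent group with $h(Q)=4$ and $\beta_1(Q)=2$. By Lemma \ref{standard} and Corollary \ref{2 gen cor}, $Q$ has the rational homology of the group $N$ of Theorem \ref{h=4}, so $\beta_2(Q)=2$ (as $N$ is homologically balanced and $h(N)>2$) and $\beta_3(Q)=2$ by Poincar\'e duality. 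I would then read $\beta_2(G)$ off the Gysin sequence of $1\to\mathbb{Z}\to G\to Q\to1$ with Euler class $e\in H^2(Q;\mathbb{Q})$ (cf. the remark after Lemma \ref{E^2_{1,1}}): since $H_1(G)\to H_1(Q)$ is an isomorphism, $\cap e\colon H_2(Q)\to H_0(Q)$ is onto, and chasing the sequence yields
\[
\beta_2(G)=3-\mathrm{rank}\bigl(\cap e\colon H_3(Q)\to H_1(Q)\bigr).
\]

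\emph{The main obstacle} is to show this last rank is $0$. Since $Q$ is an orientable $PD_4$-group over $\mathbb{Q}$, the map $\cap e\colon H_3(Q)\to H_1(Q)$ is dual to $\cup e\colon H^1(Q;\mathbb{Q})\to H^3(Q;\mathbb{Q})$. The five-term exact sequence of $1\to Q'\to Q\to\mathbb{Z}^2\to1$ shows $H_2(Q;\mathbb{Q})\to H_2(\mathbb{Z}^2;\mathbb{Q})$ is the zero map: its cokernel embeds in the $Q$-coinvariants of $H_1(Q';\mathbb{Q})=(Q'/Q'')\otimes\mathbb{Q}$, which are $(\gamma_2Q/\gamma_3Q)\otimes\mathbb{Q}\cong\mathbb{Q}$, and $H_2(\mathbb{Z}^2;\mathbb{Q})\cong\mathbb{Q}$ already surjects onto them. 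Dually $H^2(\mathbb{Z}^2;\mathbb{Q})\to H^2(Q;\mathbb{Q})$ is zero, that is, $H^1(Q;\mathbb{Q})\cup H^1(Q;\mathbb{Q})=0$. Hence for all $\alpha,\beta\in H^1(Q;\mathbb{Q})$ we have $\beta\cup(\alpha\cup e)=(\beta\cup\alpha)\cup e=0$ in $H^4(Q;\mathbb{Q})$, and nondegeneracy of the Poincar\'e pairing $H^1(Q)\times H^3(Q)\to H^4(Q)$ then forces $\alpha\cup e=0$. So $\cup e=0$, and $\beta_2(G)=3$.
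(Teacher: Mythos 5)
Your argument is correct, and its crux --- showing that cup product with the Euler class annihilates $H^1$ of a Hirsch-length-$4$ central quotient, by first showing all products of degree-one classes vanish and then invoking the nondegeneracy of the pairing $H^1\times H^3\to H^4$ --- is exactly the mechanism of the paper's proof. The setup differs. The paper simply quotients the torsion-free group $G$ by a maximal infinite cyclic subgroup $A\leq G'\cap\zeta{G}$: this produces a torsion-free $PD_4$-quotient $\overline{G}$ with $\beta_1(\overline{G})=2$ in one stroke, with no case division and no need to identify $\overline{G}$, and the vanishing of products of degree-one classes is obtained by citing \cite{Hi87} (the kernel of $\wedge^2H^1(\overline{G})\to H^2(\overline{G})$ is $\mathrm{Hom}(\overline{G}^\tau/[\overline{G},\overline{G}^\tau],\mathbb{Q})$, which is all of the one-dimensional $\wedge^2H^1(\overline{G})$ here). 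You instead normalize to a $2$-generator group via Lemma \ref{free abelian}, run a lower-central-series analysis, dispose of the class-$3$ case by identifying $G\cong F(2)/\gamma_4F(2)$ and quoting the Witt formula, and in the class-$4$ case quotient by $\gamma_4G\cong\mathbb{Z}$, identifying $Q$ rationally with the group $N$ of Theorem \ref{h=4} via Lemma \ref{standard} and Corollary \ref{2 gen cor} (legitimate, as those precede this theorem). Your five-term-sequence derivation of $H^1(Q;\mathbb{Q})\cup H^1(Q;\mathbb{Q})=0$ is a correct, self-contained substitute for the appeal to \cite{Hi87}: the coinvariants of $H_1(Q';\mathbb{Q})$ are $(\gamma_2Q/\gamma_3Q)\otimes\mathbb{Q}\cong\mathbb{Q}$, which already exhausts the one-dimensional $H_2(\mathbb{Z}^2;\mathbb{Q})$. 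The paper's choice of central subgroup buys uniformity and brevity; yours buys explicit identification of the groups involved and a from-first-principles proof of the cup-product vanishing.
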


\begin{proof}
We may assume that $G$ is torsion-free, by Lemma \ref{standard}.

The intersection $G'\cap\zeta{G}$ is non-trivial \cite[5.2.1]{Rob},
and so we may choose a maximal infinite cyclic subgroup $A\leq{G'\cap\zeta{G}}$.
Let $\overline{G}=G/A$.
Then $h(\overline{G})=4$ and $\overline{G}$ is also torsion-free, 
since the preimage of any finite subgroup in $G$ is torsion-free 
and virtually $\mathbb{Z}$.
Hence $\overline{G}$ is an orientable $PD_4$-group.
Moreover, $\beta_1(\overline{G})=2$, since $A\leq{G'}$, and
so $\overline{G}^\tau/[\overline{G},\overline{G}^\tau]\not=1$.
 
The group $G$ is a central extension of $\overline{G}$ by $\mathbb{Z}$.
The LHS cohomology spectral sequence associated to this extension 
reduces to a ``Gysin" exact sequence 
\cite[Example 5C]{McC}:
\[
0\to\mathbb{Q}e\to{H^2(\overline{G})}\to{H^2(G)}\to
{H^1(\overline{G})}\xrightarrow{\cup{e}}{H^3(\overline{G})}\to\dots,
\]
where $e\in{H^2(\overline{G};\mathbb{Z})}$ classifies the extension.
If $J$ is any finitely generated group then the kernel of the homomorphism 
$\psi_J:\wedge^2H^1(J)\to{H^2(J)}$ induced by cup product is isomorphic to 
$\mathrm{Hom}(J^\tau/[J,J^\tau],\mathbb{Q})$ \cite{Hi87}.
In our case $\wedge^2H^1(G)\to{H^2(G)}$ has rank 1, 
since $\beta_1(\overline{G})=2$.
Hence $\psi_{\overline{G}}=0$, since
$\overline{G}^\tau/[\overline{G},\overline{G}^\tau]\not=1$.
Since the cup product of $H^3(\overline{G})$ with
$H^1(\overline{G})$ is a non-singular pairing,
it follows that $\alpha\cup{e}=0$ for all
$\alpha\in{H^1(\overline{G})}$.
Hence $\beta_2(G)=2-1+2=3$. 
\end{proof}

We shall use the following extension of this result at the end of \S5.

\begin{cor}
\label{h5b2modp}
Let $G$ be a finitely generated, torsion-free nilpotent group 
such that $G/G'\cong\mathbb{Z}^2$ and $h(G)=5$.
Then $\beta_2(G;\mathbb{F}_p)=3$ for all primes $p$.
\end{cor}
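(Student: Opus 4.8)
The plan is to bootstrap from Theorem \ref{h5b2}, which (applied with $\beta_1(G)=2$, valid since $G/G'\cong\mathbb{Z}^2$) already gives $\beta_2(G;\mathbb{Q})=3$. By the Universal Coefficient Theorem this yields half of the assertion for free: $\beta_2(G;\mathbb{F}_p)\geq\mathrm{rank}\,H_2(G;\mathbb{Z})=\beta_2(G;\mathbb{Q})=3$ for every prime $p$. So it remains to prove the reverse inequality $\beta_2(G;\mathbb{F}_p)\leq3$, and for this I would run the central-quotient construction from the proof of Theorem \ref{h5b2} with coefficients $\mathbb{F}_p$.

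Since $G'\cap\zeta{G}\neq1$ \cite[5.2.1]{Rob} and is torsion-free, I would choose a maximal infinite cyclic subgroup $A\leq G'\cap\zeta{G}$ and set $\overline{G}=G/A$. As shown in the proof of Theorem \ref{h5b2}, $\overline{G}$ is again torsion-free nilpotent, with $h(\overline{G})=4$; and since $A\leq G'$ we have $\overline{G}/\overline{G}'\cong G/G'\cong\mathbb{Z}^2$, so $\overline{G}$ is generated by two elements. By Corollary \ref{2 gen cor}, $\overline{G}\cong N$, the group of Theorem \ref{h=4}. The point of passing to $N$ is that $N$ has a balanced presentation, hence is homologically balanced, so Lemma \ref{hom bal} gives $\beta_2(N;\mathbb{Q})=\beta_1(N;\mathbb{Q})=2$ and $H_2(N;\mathbb{Z})$ torsion-free; as $H_1(N;\mathbb{Z})=N/N'\cong\mathbb{Z}^2$ is torsion-free too, the Universal Coefficient Theorem upgrades this to $\beta_2(\overline{G};\mathbb{F}_p)=\beta_2(N;\mathbb{F}_p)=2$ for all $p$, while $\beta_1(\overline{G};\mathbb{F}_p)=2$.

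Finally, $G$ is a central extension of $\overline{G}$ by $A\cong\mathbb{Z}$ with $A\leq\zeta{G}\cap G'$, so the Gysin form of the bounds in Lemma \ref{E^2_{1,1}} applies over $R=\mathbb{F}_p$ and gives
\[
\beta_2(G;\mathbb{F}_p)\leq\beta_2(\overline{G};\mathbb{F}_p)-1+\beta_1(\overline{G};\mathbb{F}_p)=2-1+2=3,
\]
which together with the lower bound above proves $\beta_2(G;\mathbb{F}_p)=3$. The step that carries the argument is the identification $\overline{G}\cong N$, and with it the torsion-freeness of $H_2(N;\mathbb{Z})$: without that one would only get $\beta_2(G;\mathbb{F}_p)\leq4$ from the Gysin bound. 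The identification rests on $\overline{G}$ being torsion-free, which is precisely the claim used in the proof of Theorem \ref{h5b2} (it follows from the fact that $[b^n,x]=1$ forces $[b,x]=1$ in a torsion-free nilpotent group, so a torsion element of $\overline{G}$ would contradict the maximality of $A$); alternatively one may bypass Corollary \ref{2 gen cor} by noting that a torsion-free nilpotent group of Hirsch length $4$ is an orientable $PD_4$-group over $\mathbb{F}_p$, whence $\chi(\overline{G})=0$ and $\beta_1(\overline{G};\mathbb{F}_p)=2$ already force $\beta_2(\overline{G};\mathbb{F}_p)=2$.
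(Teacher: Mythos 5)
Your argument is correct, but it reaches the conclusion by a genuinely different route from the paper's. The paper's own proof reruns the entire Gysin-sequence argument of Theorem \ref{h5b2} with $\mathbb{F}_p$ coefficients; the delicate point there is showing that cup product with the Euler class vanishes on $H^1(\overline{G};\mathbb{F}_p)$, which requires adapting the kernel-of-cup-product result of \cite{Hi87} to mod $p$ coefficients (replacing $J^\tau$ by the subgroup generated by $J'$ and all $p$th powers, with an extra remark at $p=2$ about reductions of integral classes). You sidestep that analysis entirely by a sandwich: the lower bound $\beta_2(G;\mathbb{F}_p)\geq3$ comes for free from the rational computation of Theorem \ref{h5b2} via the Universal Coefficient Theorem, and the upper bound needs only the crude estimate $\beta_2(G;\mathbb{F}_p)\leq\beta_2(\overline{G};\mathbb{F}_p)-1+\beta_1(\overline{G};\mathbb{F}_p)$ from Lemma \ref{E^2_{1,1}} together with $\beta_2(\overline{G};\mathbb{F}_p)\leq2$. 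For that last input your parenthetical alternative is the cleanest: $\overline{G}$ is a torsion-free nilpotent $PD_4$-group over $\mathbb{F}_p$ with $\chi(\overline{G})=0$ and $\beta_1(\overline{G};\mathbb{F}_p)=2$, so $\beta_2(\overline{G};\mathbb{F}_p)=2$ at once --- and this is no more than the paper already uses for $\overline{G}$ in Theorem \ref{h5b2}. The detour through Corollary \ref{2 gen cor}, the explicit group $N$ and Lemma \ref{hom bal} is correct but unnecessary, since only an upper bound on $\beta_2(\overline{G};\mathbb{F}_p)$ enters. What your approach buys is the complete avoidance of mod $p$ cup products, at no real extra cost; it is arguably the shorter proof of this corollary, though it leans on the rational case where the paper's version is uniform in the coefficient field.
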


\begin{proof}
The argument is essentially as in the theorem, 
but the appeal to \cite{Hi87} deserves comment.
We replace $J^\tau$ by the subgroup of $J$ generated by 
$J'$ and all $p$th powers;
if $p=2$ we note also that if $z\in{H^1(\overline{G};\mathbb{F}_2)}$
then $z$ is the reduction of an integral class, and so $z^2=0$.
\end{proof}

Theorem \ref{h5b2} implies that there are no homologically balanced groups
nilpotent groups of Hirsch length 5.

\begin{cor}
\label{h=5}
Let $N$ be a finitely generated nilpotent group of Hirsch length $h(N)=5$.
Then $\beta_2(N)>\beta_1(N)$.
\end{cor}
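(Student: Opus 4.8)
\emph{Proof sketch.} The plan is to argue by cases on the value of $\beta=\beta_1(N)$, drawing entirely on the results of \S2 and \S3; nothing new is needed. First I would rule out the degenerate values $\beta\leq1$. Since $h(N)=5$ the group $N$ is not virtually cyclic, and a finitely generated nilpotent group with $\beta_1\leq1$ is virtually cyclic: replacing $N$ by $N/T$ via Lemma~\ref{standard} (which preserves all $\beta_i$) and then passing to a finite-index subgroup generated by $\beta_1(N)\leq1$ elements via Lemma~\ref{free abelian} produces a finite-index cyclic subgroup. Hence $\beta\geq2$. At the opposite extreme, if $\beta\geq4$ then the lemma of \S2 giving $\beta_2(N)>\frac14\beta_1(N)^2$ (valid because $h(N)>2$) yields $\beta_2(N)>\frac14\beta^2\geq\beta$, so the conclusion holds in this range as well. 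Equivalently, one may simply quote the remark that $\beta_2\leq\beta_1$ forces $\beta_1\leq3$.

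It then remains to settle $\beta=2$ and $\beta=3$. The case $\beta=2$ is exactly Theorem~\ref{h5b2}: a finitely generated nilpotent group with $\beta_1=2$ and Hirsch length $5$ has $\beta_2=3$, so $\beta_2(N)=3>2=\beta_1(N)$. The case $\beta=3$ takes slightly more: the quadratic bound only gives $\beta_2(N)>\frac94$, hence $\beta_2(N)\geq3$ since $\beta_2(N)\in\mathbb{Z}$. To upgrade this to a strict inequality I would invoke the first corollary to Theorem~\ref{beta=3}, according to which a finitely generated nilpotent group with $\beta_1=3$ satisfies $\beta_2=\beta_1=3$ if and only if $h=3$. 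Since $h(N)=5\neq3$, this rules out $\beta_2(N)=3$, and therefore $\beta_2(N)\geq4>3=\beta_1(N)$.

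Every step above is bookkeeping; the only place any care is required is the case $\beta=3$, where the quadratic inequality of \S2 does not by itself suffice and one must additionally use the Hirsch-length characterization of the equality case supplied by Theorem~\ref{beta=3} and its corollary. Note that Theorem~\ref{h5b2} genuinely uses $h(N)=5$, not merely $h(N)>3$, so the $\beta=2$ case really does draw on the full strength of the preceding theorem.
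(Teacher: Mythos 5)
Your proposal is correct and follows essentially the same route as the paper: the paper's proof is the one-line observation that one may assume $\beta_1(N)\leq3$ (via the quadratic bound of \S2) and then quotes Theorem \ref{beta=3} (together with its first corollary) for $\beta_1=3$ and Theorem \ref{h5b2} for $\beta_1=2$. Your more explicit disposal of $\beta_1\leq1$ and your integer-rounding step in the $\beta_1=3$ case merely spell out what the paper leaves implicit.
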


\begin{proof}
Since we may assume that $\beta_1(N)\leq3$,
this follows immediately from Theorems \ref{beta=3} and \ref{h5b2}.
\end{proof}

\section{metabelian nilpotent groups}

We shall settle the case of metabelian nilpotent groups
with the next theorem.

\begin{theorem}
\label{metabelian2}
Let $G$  be a finitely generated metabelian nilpotent group. 
If $h(G)>4$ then $\beta_2(G)>\beta_1(G)$.
\end{theorem}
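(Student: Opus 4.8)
The plan is to combine the standard reductions with an analysis of $G'$ as a module over $\mathbb{Z}[G/G']$. First, by Lemma \ref{standard} we may assume $G$ is torsion-free, and we may assume $\beta_1(G)=2$ (by Theorem \ref{beta=3} and the inequality $\beta_2>\frac{1}{4}\beta_1^2$); since $h(G)=4$ is excluded and $h(G)=5$ is handled by Corollary \ref{h=5}, we may assume $h(G)\geq6$. By Lemma \ref{free abelian} we may then replace $G$ by a subgroup of finite index with the same rational homology, so we may assume $G/G'\cong\mathbb{Z}^2$; then $G$ is generated by two elements $x,y$, and since $G$ is metabelian the abelian group $A:=G'$ is a cyclic module over $\Lambda=\mathbb{Z}[G/G']\cong\mathbb{Z}[\mathbb{Z}^2]$, generated by the image of $[x,y]$. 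Being torsion-free nilpotent, $A\cong\mathbb{Z}^m$ with $m=h(G)-2\geq4$.

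Next I would run the LHS spectral sequence $H_p(\mathbb{Z}^2;H_q(A;\mathbb{Q}))\Rightarrow H_{p+q}(G;\mathbb{Q})$ of $1\to A\to G\to\mathbb{Z}^2\to1$, in which $H_q(A;\mathbb{Q})=\wedge^q\overline{A}$, where $\overline{A}=A\otimes\mathbb{Q}$ is a cyclic module of finite length $m$ over the regular local ring $R=\mathbb{Q}[[X,Y]]$ (with $X=1-x$, $Y=1-y$), of maximal ideal $\mathfrak{m}=(X,Y)$. Write $a_i=\dim_\mathbb{Q}H_i(\mathbb{Z}^2;\overline{A})=\dim\mathrm{Tor}^R_i(\mathbb{Q},\overline{A})$. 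Cyclicity and Nakayama give $a_0=\dim\overline{A}/\mathfrak{m}\overline{A}=1$, and since $\chi(\mathbb{Z}^2)=0$ this forces $a_1=1+a_2$, where $a_2=\dim\mathrm{soc}(\overline{A})$. The five-term exact sequence together with $\beta_1(G)=\beta_1(\mathbb{Z}^2)$ (as $A\leq G'$) shows that $d^2_{2,0}\colon H_2(\mathbb{Z}^2)\to\overline{A}_{\mathbb{Z}^2}$ is an isomorphism, so $E^\infty_{2,0}=0$; no differential touches $E^2_{1,1}$, since $H_3(\mathbb{Z}^2)=0$; and $E^\infty_{0,2}=(\wedge^2\overline{A})_{\mathbb{Z}^2}/\mathrm{im}(d^2_{2,1})$. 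Therefore
\[
\beta_2(G)=\dim E^\infty_{1,1}+\dim E^\infty_{0,2}=1+a_2+\dim E^\infty_{0,2}\geq1+a_2 .
\]
If $a_2\geq2$, i.e. if $\overline{A}$ is not Gorenstein, this already gives $\beta_2(G)\geq3>\beta_1(G)$.

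The remaining, principal case is $a_2=1$: then $\overline{A}$ is Gorenstein, and being of codimension two in $R$ it is a complete intersection; here I expect the real work to lie. Since $\mathrm{im}(d^2_{2,1})$ has dimension at most $a_2=1$, it suffices to show $d^2_{2,1}=0$, for then $\beta_2(G)=2+\dim(\wedge^2\overline{A})_{\mathbb{Z}^2}\geq3$, as $(\wedge^2\overline{A})_{\mathbb{Z}^2}\neq0$ by Nakayama. The mechanism I would use is multiplicativity of the spectral sequence: in this case $E^2_{2,1}=H_2(\mathbb{Z}^2;\overline{A})\cong\overline{A}^{\mathbb{Z}^2}$ is one-dimensional, and lies in the image of the product $E^2_{2,0}\otimes E^2_{0,1}\to E^2_{2,1}$ — equivalently, the natural map $\overline{A}_{\mathbb{Z}^2}\to\overline{A}^{\mathbb{Z}^2}$ of one-dimensional spaces is nonzero — so it equals $[\mathbb{Z}^2]\cdot\mu$ for a generator $\mu$ of $E^2_{0,1}=\overline{A}_{\mathbb{Z}^2}$. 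Since $d^2$ is a derivation, $d^2(\mu)=0$, and $d^2_{2,0}([\mathbb{Z}^2])=\mu_0$ is a generator of $\overline{A}_{\mathbb{Z}^2}$, so $d^2_{2,1}([\mathbb{Z}^2]\cdot\mu)=\mu_0\cdot\mu$, which is the image of $\mu_0\wedge\mu$ in $(\wedge^2\overline{A})_{\mathbb{Z}^2}$ and hence vanishes, $\overline{A}_{\mathbb{Z}^2}$ being one-dimensional. The main obstacle is thus the nonvanishing of $\overline{A}_{\mathbb{Z}^2}\to\overline{A}^{\mathbb{Z}^2}$; failing a direct argument I would induct downwards on $h(G)$, quotienting out the now-cyclic central subgroup $Z=\zeta G\cap G'\cong\mathbb{Z}$ and using the Gysin sequence of $G\to G/Z$ (for which $(G/Z)'\otimes\mathbb{Q}\cong\overline{A}/\mathrm{soc}(\overline{A})$ is again cyclic) to push the estimate down to the base case $h(G)=5$ furnished by Theorem \ref{h5b2}.
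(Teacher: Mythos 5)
Your reductions and the spectral-sequence setup coincide with the paper's: torsion-free, $G/G'\cong\mathbb{Z}^2$, $G'\cong\mathbb{Z}^m$ with $m=h(G)-2$, the LHS sequence of $1\to G'\to G\to\mathbb{Z}^2\to1$, $a_0=1$, $a_1=a_2+1$, $E^\infty_{2,0}=0$, and $\beta_2(G)=a_1+\dim E^\infty_{0,2}$. Your disposal of the case $a_2\geq2$ is correct and is in effect what the paper does when $b_2\geq2$. But there is a genuine gap in your main (Gorenstein) case $a_2=1$. Your multiplicativity argument is not available: the \emph{homology} LHS spectral sequence does not carry a product $E^2_{p,0}\otimes E^2_{0,q}\to E^2_{p,q}$ (only the cohomology version is multiplicative), and the step you yourself flag as the obstacle --- the nonvanishing of a map $\overline{A}_{\mathbb{Z}^2}\to\overline{A}^{\mathbb{Z}^2}$ --- is not resolvable, because for a nontrivial finite-length module over $\mathbb{Q}[[X,Y]]$ there is no natural nonzero map from the coinvariants to the socle (already for $\overline{A}=R/(X^2,Y)$ the two are spanned by $\bar1$ and $\bar X$ respectively, and nothing canonically relates them). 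So $d^2_{2,1}=0$ is not established, and indeed the paper never proves it for $m\geq4$; it sidesteps the question.

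Your fallback also fails quantitatively: the Gysin sequence for $G\to G/Z$ with $Z=\zeta G\cap G'\cong\mathbb{Z}$ only gives $\beta_2(G)\geq\beta_2(G/Z)-1$, so starting from the base case $\beta_2=3$ at $h=5$ (Theorem \ref{h5b2}) you recover only $\beta_2(G)\geq2$ at $h=6$, which is not the strict inequality needed; the bound degrades by one at each step of the downward induction. What the paper uses instead, and what your argument is missing, is the elementary estimate $\rho=\dim_\mathbb{Q}H_0(G/G';\wedge^2 A)\geq2$ whenever $m>3$ (for $A=\mathbb{Q}\otimes G'$ a cyclic module of length $m$ over the completed group ring). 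Since $\dim\mathrm{im}(d^2_{2,1})\leq a_2=1$ in your Gorenstein case, this yields $\dim E^\infty_{0,2}\geq\rho-1\geq1$ and hence $\beta_2(G)\geq a_1+1=3$, with no need to control $d^2_{2,1}$ itself. (The case $m=3$, where $\rho=1$, is exactly the one the paper refers back to Theorem \ref{h5b2} for, as you also do.) If you replace your $d^2_{2,1}=0$ claim by a proof that $(\wedge^2\overline{A})_{\mathbb{Z}^2}$ has dimension at least $2$ for $m\geq4$, your argument closes and agrees with the paper's.
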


\begin{proof}
We may assume that $G$ is torsion-free and can be generated by 2 elements, 
by Lemmas \ref{standard} and \ref{free abelian}, and Theorem \ref{beta=3}.

Hence $G/G'\cong\mathbb{Z}^2$ and $G'\cong\mathbb{Z}^r$,
with $r=h(G)-2>2$.
We shall again bound $\beta_2(G)$ below by means of the LHS spectral sequence
for $G$ as an extension of $G/G'$.  
Let $A=H_1(G')=\mathbb{Q}\otimes{G'}\cong\mathbb{Q}^r$,
and let $b_i=\dim_\mathbb{Q}H_i(G/G';A)$, for $i\geq0$.
The homology groups $H_i(G/G';A)$ may be computed from the complex
\[
0\to{A}\xrightarrow{\partial_2}{A^2}\xrightarrow{\partial_1}{A}\to0
\]
arising from the standard resolution of the augmentation
$\mathbb{Q}[G/G']$-module.
Since this complex has Euler characteristic 0, $b_0-b_1+b_2=0$.

Since $G$ is nilpotent and $G'$ is infinite,  $h(G/\gamma_3G)=3$,
and so $b_0=1$.
Hence $b_1=b_2+1$.
Let $\rho=\dim_\mathbb{Q}H_0(G/G';\wedge^2A)$.
Since $G'$ is abelian, $H_2(G')\cong\wedge^2A$,
and so has rank $\binom{r}2$.
It is not hard to see that $\rho\geq2$ if $r>3$.
There are no nonzero differentials in the spectral sequence
which begin or end at the $(1,1)$ position, and so 
\[
\beta_2(G)\geq{b_1+\max\{\rho-b_2,0\}}\geq\rho+1>2.
\]
If $r=3$ then then $\dim_\mathbb{Q}H_0(G/G';\wedge^2A)=1$, 
but the conclusion still holds, by Theorem \ref{h=5}.
(In this case the differential $d^2_{2,1}$ must be 0.)
\end{proof}

The theorem is not true  if $h(G)\leq4$, as is shown by the groups $\mathbb{Z}^2$,
$F(2)/\gamma_3F(2)$ and the group of Theorem \ref{h=4}.

If $G$ is any group then $G''=[G',G']\leq\gamma_4G$,
and so all nilpotent groups of class 3 are metabelian.
This can be improved slightly for 2-generator groups,
since $F(2)/\gamma_5F(2)$ is metabelian.

\begin{cor}
\label{class 5;2}
Let $G$  be a nilpotent group of class $\leq4$ and such that $\beta_1(G)=2$.
If $h(G)>4$ then  $\beta_2(G)>\beta_1(G)$.
\end{cor}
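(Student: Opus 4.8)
The plan is to reduce to the metabelian case already disposed of in Theorem \ref{metabelian2}, using the fact recorded just above the corollary that $F(2)/\gamma_5F(2)$ is metabelian. The point is that a nilpotent group of class $\leq4$ need not be metabelian in general, but it is as soon as it can be generated by two elements; so the work is in arranging that we may assume two generators without changing the relevant invariants.

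First I would strip off torsion and excess generators. Let $T$ be the maximal finite normal subgroup of $G$. By Lemma \ref{standard}, $G/T$ has the same rational homology and the same Hirsch length as $G$; moreover $G/T$ is torsion-free (\cite[5.2.7]{Rob}), nilpotent of class $\leq4$, and $\beta_1(G/T)=2$. Applying Lemma \ref{free abelian} to $G/T$ yields a finite-index subgroup $J\leq G/T$ generated by $2$ elements with $H_i(J)\cong H_i(G/T)$ for all $i$. Then $J$ is torsion-free, nilpotent of class $\leq4$, $\beta_1(J)=2$, and $h(J)=h(G/T)=h(G)>4$ (finite-index subgroups of finitely generated nilpotent groups have the same Hirsch length).

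Next, since $J$ is generated by two elements and $\gamma_5J=1$, it is a quotient of $F(2)/\gamma_5F(2)$. A short commutator computation shows $F(2)''\leq\gamma_5F(2)$: any element of $F(2)'=\gamma_2F(2)$ is congruent to a power of $[x,y]$ modulo $\gamma_3F(2)$, so a commutator of two such elements lies in $\gamma_5F(2)$ (the cross terms involving the $\gamma_3$-parts have weight $\geq5$, and powers of a single element commute). Hence $F(2)/\gamma_5F(2)$, and therefore $J$, is metabelian. Now $J$ is a finitely generated metabelian nilpotent group with $h(J)>4$, so Theorem \ref{metabelian2} gives $\beta_2(J)>\beta_1(J)$. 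Transporting this back,
\[
\beta_2(G)=\beta_2(G/T)=\beta_2(J)>\beta_1(J)=\beta_1(G/T)=\beta_1(G),
\]
using Lemmas \ref{standard} and \ref{free abelian} for the equalities.

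There is no genuine obstacle here beyond bookkeeping; the only step deserving care is the inclusion $F(2)''\leq\gamma_5F(2)$, equivalently the statement that a $2$-generator nilpotent group of class $\leq4$ is metabelian. This is false for three or more generators (already $F(3)/\gamma_4F(3)$ is not metabelian), which is precisely why the two preliminary reductions — to a torsion-free group, then to a $2$-generator subgroup with the same homology — are essential.
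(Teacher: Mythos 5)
Your argument is correct and is essentially the paper's own proof: reduce to a two-generator group via Lemma \ref{free abelian} (the paper says ``we may again assume,'' leaving the torsion step to Lemma \ref{standard} implicitly, as you spell out), observe that such a group is a quotient of $F(2)/\gamma_5F(2)$, which is metabelian because $F(2)''\leq\gamma_5F(2)$, and then invoke Theorem \ref{metabelian2}. One small slip in your closing remark only: $F(3)/\gamma_4F(3)$ has class $3$ and hence \emph{is} metabelian, since $G''\leq\gamma_4G$ for every group $G$; the three-generator example showing that class $\leq4$ does not force metabelian is $F(3)/\gamma_5F(3)$, where $[[x,y],[x,z]]$ survives in $\gamma_4/\gamma_5$.
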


\begin{proof}
We may again assume that $G$ can be generated by $2$ elements, 
by Lemma \ref{free abelian}.
As it is then a quotient of $F(2)/\gamma_5F(2)$, 
it is metabelian,
and so the theorem applies.
\end{proof}

\section{hirsch length 6}


In this section we shall give an example of a torsion-free nilpotent group 
of Hirsch length 6 which is homologically balanced.
The construction was inspired by some of the features of the examples 
of nilpotent Lie algebras reported to us by Grant Cairns,
but stays within the realm of group theory.
The example is a central extension of a group of Hirsch length 5 by 
$\mathbb{Z}$,
and the claim is verified by means of the Gysin sequences
(with coefficients in a field) associated to the extension.

Let $K$ be the central extension of $A=\mathbb{Z}^4$ by $\mathbb{Z}$,
with presentation
\[
\langle{y,c,d,e,f}\mid[y,d]=[y,e]=f, ~[c,d]=f^{-1},~[y,c]=[c,e]=[d,e]=1,,
\]
\[
f~central
\rangle.
\]
We may define an automorphism $\theta$ of $K$ by setting
$\theta(y)=cy$, $\theta(c)=dc$, $\theta(d)=ed$, $\theta(e)=e$ and $\theta(f)=f$.
(It is straightforward to check that this definition is compatible with the relations
for $K$.)
The semidirect product $G=K\rtimes_\theta\mathbb{Z}$ has the presentation
\[
\langle{x,y,c,d,e,f}\mid{c=[x,y]},~d=[x,c],~e=[x,d],~[y,d]=[y,e]=f,
\]
\[
[c,d]=f^{-1},~[x,e]=[y,c]=[c,e]=[d,e]=[x,f]=[y,f]=1\rangle.
\]
It is clearly nilpotent, and torsion-free of Hirsch length 6,
and has abelianization $G/G'\cong\mathbb{Z}^2$ and centre 
$\zeta{G}=\langle{f}\rangle\cong\mathbb{Z}$.
Note also that $\gamma_4G\cong\mathbb{Z}^2$ and so $G/\gamma_4G$
is a proper quotient of $F(2)/\gamma_4F(2)$.

Let $\bar\theta$ be the automorphism of $A$ induced by $\theta$,
and let $\overline{G}=G/\zeta{G}\cong{A}\rtimes_{\bar\theta}\mathbb{Z}$.
Then $\overline{G}$ is a torsion-free nilpotent group of Hirsch length 5,
and so $\beta_2(\overline{G})=3$, by Theorem \ref{h5b2}.
The group $G$ is a central extension of $\overline{G}$ by $\mathbb{Z}$.
Let $\eta\in{H^2(\overline{G};\mathbb{Z})}$ classify this extension.

Let $\{x^*,y^*\}$ be a basis for 
$H^1(\overline{G};\mathbb{Z})=\mathrm{Hom}(\overline{G},\mathbb{Z})$ 
which is Kronecker dual to the basis for $G/G'$ represented by $\{x,y\}$.
Then $x^*|_A=0$, while $y^*|_A$ is trivial on $\{c,d,e\}$.
Let $\{y^*|_A, c^*, d^*, e^*\}$ be the basis for 
$H^1(A;\mathbb{Z})=\mathrm{Hom}(A,\mathbb{Z})$
which is Kronecker dual to $\{y,c,d,e\}$.
The matrix of $M^*=H^1(M)$ with respect to the basis
$\{y^*, c^*, d^*, e^*\}$ is the transpose of the matrix for $M$ 
with respect to $\{y,c,d,e\}$,
and so $M^*(y^*)=y^*$, $M^*(c^*)=c^*+y^*$, $M^*(d^*)=d^*+c^*$
and $M^*(e^*)=e^*+d^*$.
We shall henceforth write $y^*$ instead of $y^*|_A$, where appropriate,
and  drop the $\cup$ sign for cup products.
Then $H^2(A;\mathbb{Z})$ has basis 
$\{y^*c^*,y^*d^*,y^*e^*,c^*d^*,c^*e^*,d^*e^*\}$.

We may identify the restriction $\eta|_A$ by considering the induced extensions
of  2-generator subgroups of $A$, 
and we see that $\eta|A= y^*d^*+y^*e^*-c^*d^*$.
Then $\eta^2\not=0$, since 
$\eta^2|_A=-2y^*c^*d^*e^*$ generates $H^4(A)$.
Similarly, $y^*\eta|_A=-y^*c^*d^*\not=0$.

We claim that $x^*\eta^2\not=0$.
(This is most easily seen topologically. 
The group $G$ is the fundamental group of the mapping torus of 
a homeomorphism of the 4-torus, with fundamental group $A$.
The 4-torus intersects a loop representing $x$ transversely in one point,
and so represents the Poincar\'e dual to $x^*$.
Thus $x^*\eta^2\cap[G]=\eta^2\cap(x^*\cap[G])=\eta^2|_A\cap[A]=\pm1$.
There is a more algebraic argument based on \cite{Ba80}.) 

Since $\eta^2|_A=2y^*\eta|_Ae^*$ and $x^*\eta^2\not=0$, 
it follows that $x^*y^*\eta\not=0$.
On the other hand, $x^{*2}\eta=y^{*2}\eta=0$,
and so $x^*\eta$ and $y^*\eta$ are linearly independent.
Hence cup product with $\eta$ is injective on $H^1(\overline{G})$.
It now follows from the exactness of the Gysin sequence that
$\beta_2(G)=2$.

The same argument with coefficients $F=\mathbb{F}_p$ shows that
$\beta_2(G;\mathbb{F}_p)=2=\beta_1(G;\mathbb{F}_p)$ for all primes $p$,
and so $G$ is homologically balanced.
(It is at this point that we need Corollary \ref{h5b2modp}.)

It can be shown that the relations $[c,e]=[d,e]=1$, $[c,d]=f^{-1}$ and $[y,d]=f$
in the above presentation for $G$ are consequences of the other relations.
Using four of the remaining relations to eliminate the generators $c,d,e,f$
leads to a presentation with 2 generators and 4 relations.
However, we do not know whether $G$ has a balanced presentation.

\section{nilpotent lie algebras and lie groups}


In the parallel world of nilpotent real Lie algebras $\mathfrak{n}$ there 
is a similar issue, of whether there are such algebras $\mathfrak{n}$ with 
$\beta_2(\mathfrak{n})=\beta_1(\mathfrak{n})$ and of large dimension.
There are just 3 non-abelian nilpotent Lie algebras $\mathfrak{n}$
of dimension $\leq7$ with $\beta_2(\mathfrak{n})=\beta_1(\mathfrak{n})$,
and for these the common value is 2 \cite{CJP}.
One has dimension 4 and the other two have dimension 6.
The two 6-dimensional nilpotent Lie algebras with $\beta_2=\beta_1$ 
each have 1-dimensional centre,
and they may be distinguished by the dimensions of the maximal 
abelian Lie ideals of the quotients by the centre, which are 4 and 3, respectively.
These Lie algebras correspond to the Lie groups $G_4$,
$G_{6,12}$ and $G_{6,14}$ of \cite{Ni83}.

There are direct connections between these aspects of nilpotency.
A finitely generated torsion-free nilpotent group $G$ has a
Mal'cev completion $G_\mathbb{R}$, which is a 1-connected
nilpotent real Lie group of dimension $h(G)$ in which $G$ is a lattice
(i.e., a discrete subgroup such that the coset space $G_\mathbb{R}/G$
is compact). 
Conversely,
every nilpotent real Lie algebra of dimension $\leq6$ is isomorphic to one with rational structure coefficients, and so the corresponding 1-connected Lie group admits lattices, by Mal'cev's criterion \cite[Chapter 2]{Rag}.
The coset space $M=G_\mathbb{R}/G$ is aspherical, 
and so $H^q(G;\mathbb{R})\cong{H^q_{DR}(M)}$,
the de Rham cohomology of $M$.
This is in turn the cohomology of the associated Lie algebra
$\mathfrak{g}$ \cite{No54}.

We may find further examples of torsion-free nilpotent groups $G$ 
such that $h(G)=6$ and $\beta_2(G)=\beta_1(G)=2$ among lattices
 in $G_{6,12}$ and $G_{6,14}$.
Each such lattice $\Gamma$ is nilpotent and of Hirsch length 6,
and we may invoke \cite{No54} to conclude that 
$\beta_2(\Gamma)=\beta_1(\Gamma)=2$.
(However we do not know whether such lattices must be homologically
balanced.
We remark that $G$ is the group constructed in \S5 then $G_\mathbb{R}\cong{G_{6,12}}$,
since $\overline{G}=G/\zeta{G}$ has an abelian normal subgroup 
of rank 4.)

Some information is lost in passing from groups to Lie algebras,
since commensurable nilpotent groups have isomorphic completions.
Moreover,  
from dimension 7 onwards there are uncountably many isomorphism classes 
of nilpotent Lie algebras,
and so most do not derive from discrete groups.
The main result of \cite{CJP} is that there are no nilpotent Lie algebras 
$\mathfrak{n}$ of dimension 7 with 
$\beta_2(\mathfrak{n})=\beta_2(\mathfrak{n})=2$,
and so there are no such groups of Hirsch length 7.

It remains an open question whether there are any examples at all of 
homologically balanced nilpotent groups of Hirsch length $>7$.

\medskip
\noindent{\bf Acknowledgment.}
I would like to thank Grant Cairns for providing me with a copy of \cite{CJP},
and for subsequent advice.



\begin{thebibliography}{99}

\bibitem{Ba80} Barge, J. Dualit\'e dans les rev\^etements galoisiens,

Invent. Math. 58 (1980), 101--106.

\bibitem{Bi} Bieri, R. \textit{Homological Dimensions of Discrete Groups},

QMC Lecture Notes, London (1976).

\bibitem{CJP} Cairns, G., Jessup, B. and Pitkethly, J. On the Betti
numbers of nilpotent Lie algebras of small dimension,
in \textit{Integrable Systems and Foliations},

Progress in Mathematics 145, Birkh\"auser Verlag (1997), 19--31.

\bibitem{CEM} Charalambous, H., Evans, E. G. and Miller, C.
Betti numbers for modules of finite length,
Proc. Amer. Math. Soc. 109 (1990), 63--70.

\bibitem{Dw75} Dwyer, W. G. Homology, Massey products and maps between groups,

J. Pure Appl. Alg. 6 (1975), 177--190.

\bibitem{FHT} Freedman, M., Hain, R. and Teichner, P.  Betti number estimates for nilpotent groups,
in \textit{Fields Medallists Lectures} (edited by M. Atiyah and D. Iagolnitzer),

WSP Series in 20th Century Mathematics 5,

World Scientific Publications, River Edge NJ -- Singapore (1997), 413--434.

\bibitem{HW85} Hausmann, J.-C. and Weinberger, S.  Caract\'eristiques d'Euler et
groupes 

fondamentaux des vari\'et\'es en dimension 4,

Comment. Math. Helv.  60 (1985), 139--144.

\bibitem{Hi} Hillman, J.  A. \textit{Four-Manifolds, Geometries and Knots},

Geometry and Topology Monographs, vol. 5,

Geometry and Topology Publications (2002). (Revisions 2007 and 2014).

 \bibitem{Hi87} Hillman, J.  A.  The kernel of integral cup product,

J. Austral. Math. Soc. 43 (1987), 10--15.

\bibitem{Hi19} Hillman, J.  A.  $3$-manifolds with nilpotent embeddings in $S^4$,

J.Knot Theory Ramif. 29 (2020),  2050094 (7 pages).

\bibitem{Ku} Kunz, E. \textit{Introduction to Commutative Algebra},

Birkh\"auser, Boston -- Basel -- Stuttgart (1985).

\bibitem{Lu83} Lubotzky, A. Group presentation, $p$-adic analytic groups 
and lattices in $SL_2(\mathbb{C})$,
Ann. Math. 118 (1983), 115--130.

\bibitem {McC} McCleary, J. 
\textit{User's Guide to Spectral Sequences},

Mathematics Lecture Series 12, 
Publish or Perish, Inc., Wilmington (1985).

\bibitem{MKS} Magnus, W., Karras, A. and Solitar, D.  \textit{Combinatorial Group
Theory},

Interscience Publishers, New York - London - Sydney (1966).

Second revised edition, Dover Publications inc, New York (1976).

\bibitem{Ni83} Nielsen, O. A. Unitary representations and coadjoint orbits of
nilpotent Lie groups,
Queen's Papers in Pure  and Applied Mathematics 63,

Queen's University, Kingston, Ontario (1983). (117 pages).

\bibitem{No54} Nomizu, K. On the cohomology of compact homogeneous spaces
of nilpotent Lie groups,
Ann. Math. 59 (1954), 531--538.

\bibitem{Rag} Raghunathan, M. S. \textit{Discrete Subgroups of Lie Groups},

Ergeb. Math.  Bd 68, Springer-Verlag, Berlin-- Heidelberg --New York (1972).

\bibitem {Rob} Robinson, D. J. S. 
\textit{A Course in the Theory of Groups},

Graduate Texts in Mathematics 80, 

Springer-Verlag, Berlin - Heidelberg - New York (1982).    

\end{thebibliography}
\end{document}